\theoremstyle{plain}
\newtheorem{theorem}{Theorem}[section]
\newtheorem{lemma}[theorem]{Lemma}
\newtheorem{corollary}[theorem]{Corollary}
\theoremstyle{remark}
\newtheorem{remark}[theorem]{Remark}
\def\tht{\theta}
\def\Om{\Omega}
\def\om{\omega}
\def\e{\varepsilon}
\def\g{\gamma}
\def\G{\Gamma}
\def\l{\lambda}
\def\p{\partial}
\def\D{\Delta}
\def\a{\alpha}
\def\b{\beta}
\def\si{\sigma}
\def\d{\delta}
\def\L{\Lambda}
\def\z{\zeta}
\def\vp{\varphi}
\def\Ho{\mathring{W}_2}
\def\Wl{W^{\mathrm{l}}}
\def\Ws{W^{\mathrm{s}}}
\def\Wloc{W^{\mathrm{loc}}}
\def\Wosc{W^{\mathrm{osc}}}
\def\di{\,d}
\def\iu{\mathrm{i}}
\newcommand{\be}{\begin{equation}}
\newcommand{\ee}{\end{equation}}
\def\la{\langle}
\def\ra{\rangle}
\def\Op{\mathcal{H}}
\def\fm{\mathfrak{h}}
\DeclareMathOperator{\RE}{Re}
 \DeclareMathOperator{\spec}{\sigma}
\DeclareMathOperator{\dist}{dist} \DeclareMathOperator{\supp}{supp}
\DeclareMathOperator{\Dom}{\mathfrak{D}}
     \newcommand{\NN}{\mathds{N}}
     \newcommand{\PP}{\mathbb{P}}
     \newcommand{\RR}{\mathds{R}}
     \newcommand{\ZZ}{\mathds{Z}}
     \newcommand{\cB}{\mathcal{B}}
     \def\cL{\mathcal{L}}
     \newcommand{\cS}{\mathcal{S}}
\numberwithin{equation}{section}
\newcommand{\hm}[1]{\leavevmode{\marginpar{\tiny%
$\hbox to 0mm{\hspace*{-0.5mm}$\leftarrow$\hss}%
\vcenter{\vrule depth 0.1mm height 0.1mm width \the\marginparwidth}%
\hbox to 0mm{\hss$\rightarrow$\hspace*{-0.5mm}}$\\\relax\raggedright #1}}}
\begin{document}
\allowdisplaybreaks

\title{Expansion of the spectrum in the weak disorder regime for random operators in continuum space}

\author{Denis Borisov$^{1}$, Francisco Hoecker-Escuti$^{2}$ and  Ivan Veseli\'c$^{3}$}

\maketitle

\begin{itemize}
\item[(1)] \emph{Institute of Mathematics Of Ufa Scientific Center of RAS, Chernyshevsky str., 112, Ufa, 450008, Russia}

 \emph{Department of Physics and Mathematics, Bashkir
State Pedagogical University, October rev. st.~3a, Ufa, 450000,
Russia}

\emph{Faculty of Science, University of Hradec Kr\'alov\'e
Rokitansk\'eho 62, 500 03, Hradec Kr\'alov\'e, Czech Republic}

\emph{E-mail:}\ \texttt{borisovdi@yandex.ru}, \\
\emph{URL:} \texttt{http://borisovdi.narod.ru/}

 \item[(2)] \emph{Technische Universit\"at Hamburg-Harburg, Am Schwarzenberg-Campus 3, 21073 Hamburg, Germany}
 \item[(3)] \emph{Faculty of Mathematics, Technische Universit\"at Dortmund, 44227 Dortmund, Germany},
 \emph{URL:}\ \texttt{http://www.mathematik.uni-dortmund.de/sites/lehrstuhl-ix}

\end{itemize}

\begin{abstract}
  We study the spectrum of random ergodic Schr\"odinger-type operators   in the weak disorder regime.
  We give upper and lower bounds on how much the spectrum expands at its bottom for very general perturbations.
  The background operator is assumed to be a periodic elliptic differential operator on $\RR^d$, not necessarily of second order.
\end{abstract}

\section{Introduction}

Quantum Hamiltonians modelling condensed matter typically exhibit a lattice structure. In the idealized setting one uses operators which are periodic { with respect to  } the lattice, while more realistic models, taking into account disorder present in the model, consist in random operators which are homogeneous and ergodic { with respect to  } lattice translations. In the framework of quantum mechanics the key  to understanding properties of the underlying physical system is the spectrum of the operator.  It is a fundamental fact in the theory of random operators that the spectrum of an ergodic or metrically-transitive ensemble of operators is almost surely non-random, in the sense that there exists a fixed set $\Sigma \subset \RR$ such that for almost all elements in the ensemble, the spectrum of the operator coincides with $\Sigma$.
This holds even in an abstract Hilbert space setting, cf. e.g. \cite{KirschM-82a}.

The structure or shape of the spectrum as a subset of the real axis and its measure theoretic features are intimately connected to conductance and transport properties of the condensed matter.
With the emergence of disorder the spectrum of as a set grows. In order to understand this expansion in a quantitative way, one introduces a scalar parameter coupling the random part of the Hamiltonian to the periodic background operator.  In the simplest case the coupling is linear.  The question of interest in the present paper is: At what rate does the spectrum expand, as the scalar coupling increases? This is the first step in the analysis of more delicate spectral properties.
Indeed, it is precisely the new portions of the spectrum generated by the random perturbation of the Hamiltonian where interesting phenomena (like a transition from point to continuous spectrum) are expected to occur. We consider here only the expansion of spectrum at its lowest edge, although it seems that a refinement of our methods together with substantially more technical effort would apply to general spectral edges as well.

The \emph{motivation for the present work} is the following:
The most commonly encountered situation for weak-disorder random Schr\"odinger operators
is that the spectrum expands linearly with the disorder --- provided that sign conspiracy does not prevent expansion altogether.
(If $-\Delta$ is perturbed by a non-negative potential, there is obviously no expansion of the spectrum.)
However, there are examples where the expansion is quadratic in the disorder parameter. So it is natural to ask whether even
\emph{slower growth} (cubic, quartic, \ldots) \emph{is possible to occur?}
In the most important case, growth can occur only either with a linear or a quadratic rate,
as we discuss in Corollary \ref{c:1}  and Remark \ref{r:1} below.

More generally, we derive in the present paper
estimates on the expansion of the spectrum which are in line  with earlier research,
in particular on the weak disorder regime, {e.~g.}
\cite{aizenman1994localization}, \cite{wang2001},
\cite{klopp2002a}, \cite{klopp2002b}, \cite{elgart2009lifshitz}, \cite{elgartetal2011locnonmono}, \cite{caoelgart2012}, \cite{hoeckerescuti2013}, \cite{hoeckerescuti2014},
\cite{borisovveselic2011}, \cite{borisovveselic2013}. Let us give some more details:
In \cite{aizenman1994localization} the fractional moment method is developed to show localization for the Anderson model in the weak disorder regime,
\cite{wang2001} treats the Anderson model as well and gives an estimate on the size of the energy interval where localization occurs as a function of the disorder parameter.
This bound has been refined in \cite{klopp2002a} and \cite{elgart2009lifshitz}, while a similar result for random operator on $\RR^d$ has been derived in
\cite{klopp2002b}.
In the later paper the single site potential is allowed to change sign, but needs to have a non-vanishing average.
In the context of the present paper, this means the spectral edge is shifted linearly as a function of the disorder parameter.
Discrete alloy type models on $\ZZ^d$ have been analyzed in \cite{elgartetal2011locnonmono},
and for $\ZZ^3$ an analogous result to \cite{klopp2002b} has been derived in \cite{caoelgart2012}. It covers also the case
when the average of the single site potential is zero, which requires a more detailed analysis.
In particular,  this means the spectral edge is shifted quadratically as a function of the disorder parameter.
A particular regime where one can prove Anderson localization is the Lifshitz tail region near the spectral bottom.
Upper and lower bounds on the size of the region (again as a function of the disorder parameter) have been given in
\cite{hoeckerescuti2013} and \cite{hoeckerescuti2014}. The results mentioned so far concern models where a random potential
is coupled linearly to a kinetic energy term. In contrast to this \cite{borisovveselic2011} and \cite{borisovveselic2013} analyse
low lying eigenvalues of (long, finite segments of) randomly shifted and curved waveguides, respectively.
In this case the random perturbation term depends in a nonlinear way on the random variables and
the disorder parameter and consist of (lower order) differential operators.

The present paper
provides a fundamental analysis about the location of the spectrum which is a prerequisite for identifying the energy  region
of Anderson localization.
For a broader discussion of the physical intuition and the relevance of our result to the general understanding of spectral properties of random Hamiltonians
we refer to the discussion in our previous work \cite{BorisovHEV}.
There we have carried out an analogous analysis for discrete Hamiltonians, i.e. matrix operators over $\ell^2(\ZZ^d)$. In the discrete setting one has less technical questions to take care of, for instance properly defined domains or compactness properties, either because certain operators are automatically bounded, or because auxiliary Hilbert spaces are finite dimensional.
Nevertheless, our approach carries over to a very general class of operators in continuum space. In the present paper we chose not to present the most general model class (which we may do in some later manuscript) but to restrict ourselves to the case that the unperturbed periodic part of the Hamiltonian is a differential operator, and the perturbations satisfy certain relative boundedness conditions. This framework has the advantage that it on one hand covers a variety of physically relevant cases, but on the other hand is specific enough to avoid a long list of abstract hypotheses.

We close the section
highlighting  the general scope, the flexible versatility, and the technical advancements of our approach.
In comparison to mentioned previous work on spectral properties of random Schr\"odinger type operators in the weak-disorder regime
the present paper has the following new features
\begin{itemize}
 \item The results are formulated for very general types of random operators (actually quadratic forms).

 \item {The results require only weak assumptions on the random variables entering the model. In particular, the range of values need not be of fixed sign, nor an interval.}

 \item {No monotonicity condition for the random perturbations is assumed.}

 \item {We prove both upper and lower bounds for the bottom of the perturbed spectrum.}
\end{itemize}
This is motivated by the fact that the mechanism which determines the expansion of the spectrum is
of quite universal nature. So it makes more sense to establish it once and for all models of practical importance,
rather than to repeat the argument in each setting separately.

\section{Model and main results}

Let $x=(x_1,\ldots,x_d)$ be Cartesian coordinates in $\mathds{R}^d$, $d\geqslant d_1\geqslant 1$, $e_1,\ldots,e_{d_1}$ be linearly independent vectors in $\mathds{R}^d$, and $\G$ be the lattice $\{x\in\mathds{R}^d:\, x=z_1 e_1 +\ldots + z_{d_1}e_{d_1},\,z_i\in \mathds{Z}\}$. By $\Pi$ we denote an infinite domain in $\mathds{R}^d$ which is invariant  with respect to   shifts along $\G$, namely, for each $x\in\Pi$, $k\in\G$ we have $x-k\in\Pi$.
Let $\square$ be a periodicity cell of $\Pi$, i.e., a minimal domain such that $\Pi$ is the interior of $\overline{\bigcup\limits_{k\in\G} \square_k}$, $\square_k:=\{x\in\mathds{R}^d:\, x-k\in \square \}$.

Let $m\geqslant 1$ be a given natural number, 
$A_{\a\b}=A_{\a\b}(x)$, $\a,\b\in\mathds{Z}_+^d$, $|\a|,|\b|\leqslant m$, be functions defined on $\Pi$ satisfying the conditions:
\begin{equation}\label{2.1}
\begin{aligned}
&A_{\a\b}\ \text{are $\square$-periodic},\quad A_{\a\b}\in C^{|\a|}(\overline{\Pi}), 
\quad \overline{A_{\b\a}}=A_{\a\b},
\\
&\sum\limits_{\genfrac{}{}{0 pt}{}{\a,\b\in\mathds{Z}_+^d}{|\a|=|\b|=m}} A_{\a\b}(x)\xi^{\a+\b} 
\geqslant c_0
|\xi|^{2m},\quad x\in\overline{\Pi},\quad \xi\in\mathds{R}^d,
\end{aligned}
\end{equation}
where for $\xi=(\xi_1,\ldots,\xi_d)$, $\a+\b=(\a_1+\b_1,\ldots,\a_2+\b_2)$ we denote $\xi^{\a+\b}:=\xi_1^{\a_1+\b_1}\cdot\ldots\cdot\xi_d^{\a_d+\b_d}$,
and $c_0$ is a fixed positive constant independent of $x$ and $\xi$.
The boundary of the domain $\Pi$ is assumed to be $C^{2m}$-smooth.
In $L_2(\Pi)$ we consider the operator
\begin{equation}\label{2.2}
\Op_0:=  \sum\limits_{
\genfrac{}{}{0 pt}{}{\a,\b\in \mathds{Z}_+^d}{|\a|,|\b|\leqslant m}
} (-1)^{|\a|}\p^\a A_{\a\b} \p^\b,\quad \p^\a:=\frac{\p^\a\,}{\p x^\a} 
\end{equation}
subject to the Dirichlet condition
\begin{equation}\label{2.3}
u=0,\quad \frac{\p^j u}{\p\nu^j}=0,\quad \text{on}\quad \p\Pi,\quad j=1,\ldots,m-1,
\end{equation}
where $\nu$ is the outward normal to $\p\Pi$.
Thanks to  conditions (\ref{2.1}), the operator $\Op_0$ is self-adjoint, elliptic, and lower  semi-bounded
on the domain
\begin{equation*}
\Dom(\Op_0):=\{u\in W_2^{2m}(\Pi):\, \text{boundary conditions (\ref{2.3}) are satisfied}\}.
\end{equation*}
The associated sesquilinear form is
\begin{equation}\label{2.4}
\fm_0(u,v)= \sum\limits_{ \genfrac{}{}{0 pt}{}{\a,\b\in\mathds{Z}_+^d}{|\a|,|\b|\leqslant m}} (A_{\a\b} \p^\b u , \p^\a v )_{L_2(\Pi)} \quad\text{in}\quad L_2(\Pi) 
\end{equation}
on the domain
\begin{equation*}
\Dom(\fm_0):=\{u\in W_2^{m}(\Pi):\, \text{boundary conditions (\ref{2.3}) are satisfied}\}.
\end{equation*}
The above stated facts about $\Op_0$ can be proven in the same way as for second order differential operators,
one should just  employ appropriate smoothness improving theorems, see \cite[Ch. III, Sect. 6, Lm. 6.3]{Ber}.

Let  $\omega_k$, $k\in\G$, be a sequence  of bounded, non-trivial, independent, identically distributed random variables with distribution measure $\mu$. We assume that
\begin{equation*}
\{s_-,s_+\}\in \supp\mu \subseteq [s_-,s_+],
\end{equation*}
where the
numbers $s_\pm$ satisfy one of the following alternatives:
\begin{equation}\label{2.7}
s_-<0<s_+
\end{equation}
or
\begin{equation}\label{2.8}
0\leqslant s_-<s_+.
\end{equation}
Without loss of generality we suppose that each random variable $\omega_k$ is defined on the probability space $(\supp \mu,\mathfrak{B},\mu)$, where $\mathfrak{B}$ is the Borel $\si$-algebra on $\supp \mu\subset [s_-,s_+]$,
and $\omega_k$ are just the identity mappings on $\supp \mu$.
Thus the random field  $(\omega_k)_{k\in\G}$ 
is an element of the product probability space $(\Om, \bigotimes_\G \mathfrak{B}, \PP)$ with
measure $\PP:=\bigotimes_{k\in\G} \mu$ and configuration space $\Om:=\times_{k\in\G}\supp \mu$.

We let $\g_\Pi:=\p\square\cap\p\Pi$, $\g_l:=\p\square\setminus\p\Pi$.
By $\Ho^{2m}(\square,\g_\Pi)$ we denote the subspace of $W_2^{2m}(\square)$ consisting of functions satisfying the boundary conditions
\begin{equation}\label{2.5}
u=0,\quad \frac{\p^j u}{\p\nu^j}=0\quad \text{on}\quad \g_\Pi,\quad j=1,\ldots,m-1.
\end{equation}
Let $\cL(t)$, $t\in[-t_0,t_0]$, $t_0>0$, be a family of operators from $\Ho^{2m}(\square,\g_\Pi)$ into $L_2(\square)$ defined as
\begin{equation}\label{2.6}
\cL(t)=t \cL_1 + t^2 \cL_2 + t^3 \cL_3(t),
\end{equation}
where $\cL_i$ are bounded symmetric operators from $\Ho^{2m}(\square,\g_\Pi)$ into $L_2(\square)$;
moreover,  the operator $\cL_3(t)$ is assumed to be bounded uniformly in $t$.
We also suppose that the map
$$
t \mapsto \left(\cL_3(t): \Ho^{2m}(\square, \g_\Pi)\to L_2(\square)\right)
$$
is continuous in $t\in[-t_0,t_0]$.

By $\cS(k)$, $k\in\G$, we denote the shift operator: $(\cS(k)u)(x):=u(x-k)$.
The main operator of our study is
\begin{equation}\label{2.8a}
\Op_\e(\om):=\Op_0 + \cL_\e(\om),
\quad \om:=(\omega_k)_{k\in\G},\quad \cL_\e(\om):=\sum\limits_{k\in\G} \cS(k) \cL(\e\omega_k) \cS(-k),
\end{equation}
in $L_2(\Pi)$ on the domain $\Dom(\Op_\e(\om)):=\Dom(\Op_0)$. Here $\e$ is a small positive parameter.

Let us clarify the action of the operator $\cL_\e(\om)$. Given $u\in\Dom(\Op_0)$, it is clear that the restriction of $u$ on $\square_k$ 
belongs to $\Ho^{2m}(\square_k,\g_\Pi^k)$, $\g_\Pi^k:=\p\square_k\cap\p\Pi$. Then $(\cS(-k)u)(x)=u(x+k)$ is an element of $\Ho^{2m}(\square,\g_\Pi)$ and the action of $\cL(\e\omega_k)$ on $\cS(-k)u$ is well-defined as an element of $L_2(\square)$.
By applying $\cS(k)$ to the result of the action, we just shift the function $\cL(\e\omega_k)\cS(-k)u$ to  the cell $\square_k$.
In this way each operator $\cS(k) \cL(\e\omega_k) \cS(-k)$ acts on $\Ho^{2m}(\square_k,\g_\Pi^k)$
and the operator $\cL_\e(\om)$ is a sum of such single cell actions.
Since the operators $\cL_i$ are $\Op_0$-bounded and symmetric, by  the Kato-Rellich theorem  the operator $\Op_\e(\om)$ is self-adjoint for sufficiently small $\e$.

Our main aim is to study the behavior of the spectrum $\spec(\Op_\e(\om))$ of the operator $\Op_\e(\om)$.
The later results will consider the asymptotic behaviour  for (very) small $\e>0$.
For our first main result we merely require that $\e$ is sufficiently small so that for all $\omega$ the
perturbed operator $\Op_\e(\om)$ is self-adjoint.

\begin{theorem}\label{th2.1}
For all sufficiently small $\e$ there exists a closed set $\Sigma_\e$ such that
\begin{equation}\label{2.9}
\spec(\Op_\e(\om))=\Sigma_\e\quad\PP-a.s.
\end{equation}
The set $\Sigma_\e$ is equal to the closure of the union of spectra of periodic realizations of $\Op_\e(\om)$, explicitly,
\begin{equation}\label{2.10}
\Sigma_\e=\overline{\bigcup\limits_{N\in \NN} \quad \bigcup\limits_{\xi \ \text{is $2^N \G$-periodic}} \spec\big(\Op_\e(\xi)\big)},
\end{equation}
where the second union is taken over all sequences $\xi: \G\to\supp \mu $, which are periodic  with respect to   the sublattice $2^N \G:=\{2^N q:\, q\in\G\}$.
\end{theorem}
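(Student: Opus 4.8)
The statement is the classical non-randomness / ergodicity result for the spectrum, adapted to our continuum setting, together with the characterization of the almost-sure spectrum via periodic approximations. The plan is to follow the by now standard two-step scheme: (i) establish ergodicity of the family $(\Op_\e(\om))_{\om\in\Om}$ with respect to the $\G$-action on $\Om$ and deduce existence of $\Sigma_\e$; (ii) prove the identity \eqref{2.10} by a sandwich argument, showing $\Sigma_\e$ contains and is contained in the right-hand side.

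\emph{Step 1 (ergodicity and existence of $\Sigma_\e$).} For $k\in\G$ let $T_k\colon\Om\to\Om$ be the shift $(T_k\om)_j=\om_{j-k}$; since $\PP=\bigotimes_{\G}\mu$ is a product measure, each $T_k$ is measure-preserving and the group $\{T_k\}_{k\in\G}$ acts ergodically on $(\Om,\PP)$ (indeed it is mixing). The key algebraic fact is the covariance relation
\begin{equation*}
\cS(k)\,\Op_\e(\om)\,\cS(-k)=\Op_\e(T_k\om),\qquad k\in\G,
\end{equation*}
which follows directly from the $\square$-periodicity of the coefficients $A_{\a\b}$ in \eqref{2.1} (hence $\cS(k)\Op_0\cS(-k)=\Op_0$), from the $\G$-invariance of $\Pi$ and of the boundary conditions \eqref{2.3}, and from the very definition of $\cL_\e(\om)$ in \eqref{2.8a} as a sum of single-cell actions: relabelling the summation index $q\mapsto q+k$ turns $\cS(k)\cL_\e(\om)\cS(-k)$ into $\sum_q\cS(q)\cL(\e\omega_{q-k})\cS(-q)=\cL_\e(T_k\om)$. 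One also needs measurability of $\om\mapsto\Op_\e(\om)$ in the sense of measurable families of self-adjoint operators (e.g. measurability of $\om\mapsto(\Op_\e(\om)-z)^{-1}$ for fixed $z\notin\RR$); this is inherited from the continuity of $t\mapsto\cL_3(t)$ together with the resolvent identity, using that each $\cS(k)\cL(\e\omega_k)\cS(-k)$ depends continuously (hence measurably) on $\omega_k$ and the series converges in operator norm relative to $\Op_0$. Given covariance plus measurability plus ergodicity, the abstract theorem of Kirsch--Martinelli (\cite{KirschM-82a}, cited in the introduction) applies verbatim and yields a closed set $\Sigma_\e\subset\RR$ with $\spec(\Op_\e(\om))=\Sigma_\e$ $\PP$-a.s., which is \eqref{2.9}.

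\emph{Step 2 (periodic approximation).} Denote by $\Sigma_\e^{\mathrm{per}}$ the right-hand side of \eqref{2.10}. For the inclusion $\Sigma_\e^{\mathrm{per}}\subseteq\Sigma_\e$: fix $N$ and a $2^N\G$-periodic sequence $\xi$; the support condition $\{s_-,s_+\}\subset\supp\mu$ (hence every value $\xi_k$ lies in $\supp\mu$) implies that for any finite box $\Lambda$ and any $\delta>0$ the cylinder event $\{\om:\ |\omega_k-\xi_k|<\delta\ \forall k\in\Lambda\}$ has positive $\PP$-probability. Combined with ergodicity (the a.s. spectrum is a translation-invariant closed set, so it is approximated from within by local configurations), a standard Weyl-sequence / geometric-resolvent argument shows $\spec(\Op_\e(\xi))\subseteq\Sigma_\e$: one takes a Weyl sequence for $\Op_\e(\xi)$ at a point $\l\in\spec(\Op_\e(\xi))$ supported in ever larger boxes, perturbs the background configuration to agree with $\xi$ on the relevant box up to error $\delta$, uses the relative $\Op_0$-boundedness of the $\cL_i$ to control the resulting error in the form sense, and lets $\delta\to0$. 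Taking closure and the union over all $N$ and $\xi$ gives $\Sigma_\e^{\mathrm{per}}\subseteq\Sigma_\e$.

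For the reverse inclusion $\Sigma_\e\subseteq\Sigma_\e^{\mathrm{per}}$: let $\l\in\Sigma_\e$ and pick (for a.e.\ $\om$) a Weyl sequence $(u_n)$ with $\|u_n\|=1$, $\supp u_n$ contained in a box $Q_n\subset\Pi$, and $\|(\Op_\e(\om)-\l)u_n\|\to0$ (such compactly-supported Weyl sequences exist because $\Op_\e(\om)$ is a differential operator with the stated ellipticity, so one may cut off a general Weyl sequence using the elliptic a priori estimates from \cite[Ch.\ III]{Ber} to absorb commutator terms). Since $\Op_\e(\om)$ acts on $u_n$ only through the finitely many cells meeting $Q_n$, the quantity $(\Op_\e(\om)-\l)u_n$ depends on $\om$ only through $\{\omega_k:\ \square_k\cap Q_n\neq\emptyset\}$. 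Now define a $2^N\G$-periodic configuration $\xi^{(n)}$ by copying these finitely many values of $\om$ into one period (choosing $N$ large enough that the box $Q_n$ fits inside a fundamental domain of $2^N\G$, which also keeps the periodic copies from interacting) and filling the rest arbitrarily with values in $\supp\mu$; then $(\Op_\e(\xi^{(n)})-\l)u_n=(\Op_\e(\om)-\l)u_n\to0$, so $\l\in\spec(\Op_\e(\xi^{(n)}))$ modulo an error tending to $0$, whence $\l\in\Sigma_\e^{\mathrm{per}}$. This proves \eqref{2.10}.

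\emph{Main obstacle.} The abstract ergodicity part (Step 1) is routine once covariance and measurability are in place. The genuinely technical point is the use of \emph{compactly supported} Weyl sequences in Step 2: for a second-order Schr\"odinger operator one localizes with an IMS-type partition of unity and the commutator $[\Op_\e,\chi]$ is lower order and easy to bound, but for our $2m$-th order operator $\Op_0$ the commutator with a cutoff produces derivatives up to order $2m-1$, and one must invoke the elliptic a priori / smoothness-improving estimates (as cited after \eqref{2.4}) together with the Dirichlet boundary conditions \eqref{2.3} near $\p\Pi$ to show that these terms are small on a Weyl sequence. Handling the cutoff uniformly near the boundary $\p\Pi$ (which is only assumed $C^{2m}$) and verifying that the perturbed configurations still respect the single-cell structure of $\cL_\e$ are where the continuum setting costs real work compared with the discrete case treated in \cite{BorisovHEV}.
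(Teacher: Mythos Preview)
Your proposal is correct and follows essentially the same approach as the paper. Step~1 is identical (covariance, measurability, ergodicity, then Kirsch--Martinelli \cite{KirschM-82a}); for Step~2 the paper first proves the intermediate identity $\Sigma_\e=\bigcup_{\xi\in\Om}\spec(\Op_\e(\xi))$ via exactly the Weyl-sequence transplant you describe (with the same elliptic a~priori bound $\|\vp_p\|_{W_2^{2m}(\Pi)}\leqslant C$ to localise), and then passes from arbitrary $\xi$ to periodic approximants $\xi^{(N)}$ by \emph{strong resolvent convergence} of $\Op_\e(\xi^{(N)})\to\Op_\e(\xi)$ rather than by your direct periodisation of a compactly supported Weyl sequence --- but this is a minor technical variant of the same idea and both work.
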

Here we adopt the following convention: In statements which are deterministic, i.e.~valid for all configurations
$\xi\colon \G \to \supp\mu$ we will denote the sequences by $\xi$, in statements which are probabilistic, e.g.~hold only for almost all configurations, we will use the symbol $\omega$ for the configuration $\omega\colon \G \to \supp\mu$.

The next part of our results is devoted to the position of $\inf \Sigma_\e$.
More precisely, we shall describe how $\inf \spec(\Op_0)$ is shifted by the perturbation $\cL_\e(\om)$.
First we describe the spectrum of $\Op_0$. Since this operator is periodic, we can employ a Floquet-Bloch decomposition to find $\spec(\Op_0)$. We introduce the Brillouin zone
\begin{equation*}
\square^*:=\{\theta\in\la e_1,\ldots,e_{d_1}\ra:\, \theta=\theta_1^* e_1^*+\ldots +\theta_{d_1}^* e_{d_1}^*,\, \theta_i^*\in[0,1)\},
\end{equation*}
where $e_1^*,\ldots,e_{d_1}^*$ are the vectors in the linear span $S:=\la e_1,\ldots,e_{d_1}\ra$ defined by the conditions
\begin{equation*}
(e_i,e_j^*)_{\mathds{R}^d}=2\pi\d_{ij},
\end{equation*}
and $\d_{ij}$ is the Kronecker delta.
On $\square$ we introduce the operator
\begin{equation}\label{2.11}
\Op_0(\theta):=
\sum\limits_{
\genfrac{}{}{0 pt}{}{\a,\b\in \mathds{Z}_+^d}{|\a|,|\b|\leqslant m}
} (-1)^{|\a|} (\p + i \theta)^\a A_{\a\b} (\p + i \theta)^\b  
\end{equation}
subject to boundary conditions (\ref{2.5}) and to periodic boundary conditions on $\g_l$.
Here, $\iu$ is the imaginary unit and for a given $\a=(\a_1,\ldots,\a_d)\in \mathds{Z}_+^d$, $\theta=(\theta_1,\ldots,\theta_d)\in S$, the symbol $(\p + i \theta)^\a$ stands for the differential expression
\begin{equation*}
\left(\frac{\p\,}{\p x_1}+ i\theta_1\right)^{\a_1} \left(\frac{\p\,}{\p x_2}+ i\theta_2\right)^{\a_2}\cdot\dots\cdot\left(\frac{\p\,}{\p x_d}+ i\theta_d\right)^{\a_d}.
\end{equation*}
The spectrum of $\Op_0$ is given by the identity \cite[Ch. 4, Sect. 4.5, Thm. 4.5.1]{K93}
\begin{equation}\label{2.12}
\spec(\Op_0)=\bigcup\limits_{\theta\in\square^*} \spec(\Op_0(\theta)).
\end{equation}
We assume that $\inf\spec(\Op_0(\theta))$ is a discrete eigenvalue for all $\theta\in\square^*$.
We denote this eigenvalue by $E_0(\theta)$.
The function $\theta\mapsto E_0(\theta)$ is $\square^*$-periodic and continuous in $\overline{\square^*}$, a compact set.
Consequently it attains the global minimum at some point $\theta_0\in\square^*$:
\begin{equation}\label{2.14}
\L_0:=E_0(\theta_0)=\min\limits_{\square^*} E_0(\theta).
\end{equation}
There is a positive constant  $c_1$ independent of $\theta\in\square^*$ such that
\begin{equation}\label{2.13}
\dist\Big(E_0(\theta),\spec(\Op_0(\theta))\setminus\{E_0(\theta)\}\Big)\geqslant c_1,
\end{equation}
Again by compactness and formula (\ref{2.12})
\begin{equation}\label{2.15}
\inf\spec(\Op_0)=\L_0.
\end{equation}
The eigenfunctions of $\Op_0(\theta_0)$ associated with $E_0(\theta_0)$ and orthonormalized in $L_2(\square)$ are denoted by $\psi_0^{(j)}$, $j=1,\ldots,n$.
We choose these eigenfunctions so that the matrix with the entries
$(\cL_1 e^{\iu \theta_0 x}\psi_0^{(i)},e^{\iu \theta_0 x}\psi_0^{(j)})_{L_2(\square)}$ is diagonal.
This  is possible by the theorem on simultaneous diagonalization of two quadratic forms in a finite-dimensional space.

Consider the $2n $ numbers
\begin{equation*}
s_-(\cL_1 \psi_0^{(i)},\psi_0^{(i)})_{L_2(\square)},\quad i=1,\ldots,n, \quad s_+(\cL_1 \psi_0^{(i)},\psi_0^{(i)})_{L_2(\square)},\quad i=1,\ldots,n.
\end{equation*}
Assume that the minimal value is attained at an index $i=i_0$ and for $s_*\in\{s_-,s_+\}$. We denote
\begin{equation}\label{2.16}
\L_1:=(\cL_1e^{\iu \theta_0 x}\psi_0,e^{\iu \theta_0 x}\psi_0)_{L_2(\square)},\quad \psi_0:=\psi_0^{(i_0)}.
\end{equation}
By $\psi_1$ we denote the unique solution to the equation
\begin{equation}\label{2.17}
(\Op_0(\theta_0)-\L_0) \psi_1=-e^{-\iu \theta_0 x} \cL_1 e^{\iu \theta_0 x}\psi_0+\L_1\psi_0
\quad 
\perp \ker  (\Op_0(\theta_0)-\Lambda_0) 
\end{equation}
such that this solution is orthogonal to $\psi_0^{(j)}$, $j=1,\ldots,n$. We let
\begin{equation}\label{2.18}
\L_2:=(\cL_1e^{\iu \theta_0 x}\psi_1,e^{\iu \theta_0 x}\psi_0)_{L_2(\square)} +(\cL_2e^{\iu \theta_0 x}\psi_0,e^{\iu \theta_0 x}\psi_0)_{L_2(\square)}.
\end{equation}
Our next main result reads as follows.
\begin{theorem}\label{th2.2}
For any minimizing triple $\theta_0\in\square^*$, $i_0\in\{1, \ldots,n\}$, $s_*\in\{s_-,s_+\}$
as defined above, there exist $C,\e _0 \in(0,\infty)$ such that for all $\e \in [0, \e _0]$
and for all sequences $\xi\colon \Gamma \to [s_-,s_+]$
\begin{equation}\label{2.19}
\spec(\Op_\e(\xi))\subseteq\big\{\l\in\mathds{R}:\, \dist(\l,\spec(\Op_0))\leqslant C\e(|\l|+1)\big\}.
\end{equation}
For the bottom of the almost sure spectrum $\Sigma_\e$ the estimate
\begin{equation}\label{2.20}
\Sigma_\e
\leqslant \L_0+
\e s_* \L_1 + \e^2 s_*^2 \L_2 + \frac{\e^3 s_*^3 \L_3(\e s_*)}{1+\e^2 s_*^2 \|\psi_1\|_{L_2(\square)}^2}
\end{equation}
holds true, where $\L_3 \colon \RR \to \RR$ is a continuous function, in particular, uniformly  bounded on compact intervals.
\end{theorem}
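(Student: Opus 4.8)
The plan is to prove \eqref{2.19} and \eqref{2.20} separately.

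\smallskip
\emph{The inclusion \eqref{2.19}.} The first step is a \emph{uniform} relative bound for $\cL_\e(\xi)$. Put $s_{\max}:=\max\{|s_-|,|s_+|\}$. By boundedness of each $\cL_i\colon\Ho^{2m}(\square,\g_\Pi)\to L_2(\square)$ and of $\cL_3(t)$ uniformly in $t$, the expansion \eqref{2.6} together with $|\e\xi_k|\leqslant\e s_{\max}$ yields, as soon as $\e s_{\max}\leqslant t_0$, a single-cell bound $\|\cL(\e\xi_k)v\|_{L_2(\square)}\leqslant C\e\,\|v\|_{\Ho^{2m}(\square)}$ with $C$ independent of $k\in\G$ and of the sequence $\xi$. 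Squaring, summing over the cells $\square_k$ that tile $\Pi$, and invoking the elliptic a priori estimate $\|u\|_{W_2^{2m}(\Pi)}\leqslant C(\|\Op_0 u\|_{L_2(\Pi)}+\|u\|_{L_2(\Pi)})$ on $\Dom(\Op_0)$ (a consequence of the ellipticity of $\Op_0$ and of the smoothness-improving results quoted for it), one obtains $\|\cL_\e(\xi)u\|_{L_2(\Pi)}\leqslant C\e(\|\Op_0 u\|_{L_2(\Pi)}+\|u\|_{L_2(\Pi)})$ for all $u\in\Dom(\Op_0)$, with $C$ uniform in $\xi$ and $\e$. Then, for $\l\in\spec(\Op_\e(\xi))$ one picks a Weyl sequence $u_n\in\Dom(\Op_0)$, $\|u_n\|=1$, $\|(\Op_\e(\xi)-\l)u_n\|\to0$; from $(\Op_0-\l)u_n=(\Op_\e(\xi)-\l)u_n-\cL_\e(\xi)u_n$ and $\|\Op_0 u_n\|\leqslant\|(\Op_0-\l)u_n\|+|\l|$ it follows that $\|(\Op_0-\l)u_n\|\leqslant o(1)+C\e(\|(\Op_0-\l)u_n\|+|\l|+1)$, and absorbing the first term on the right for $\e$ small gives $\liminf_n\|(\Op_0-\l)u_n\|\leqslant C'\e(|\l|+1)$. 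Since $\dist(\l,\spec(\Op_0))\leqslant\|(\Op_0-\l)u_n\|$ for every $n$, this proves \eqref{2.19} with a constant independent of $\xi$; the case $\e=0$ is trivial.

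\smallskip
\emph{Reduction for \eqref{2.20}.} The crucial observation is that the constant configuration $\xi_*\equiv s_*$ takes values in $\{s_-,s_+\}\subset\supp\mu$ and is $\G$-periodic, so Theorem~\ref{th2.1} gives $\spec(\Op_\e(\xi_*))\subseteq\Sigma_\e$ and hence $\inf\Sigma_\e\leqslant\inf\spec(\Op_\e(\xi_*))$. The operator $\Op_\e(\xi_*)$ is $\G$-periodic and, for $\e$ small, remains elliptic, self-adjoint and lower semibounded (the perturbation is $\Op_0$-bounded with small relative bound, so it destroys neither these properties nor the compactness of the fibre resolvents), so a Floquet--Bloch decomposition as in \eqref{2.12} applies, with fibre operators $\cT_\e(\theta):=\Op_0(\theta)+e^{-\iu\theta x}\cL(\e s_*)e^{\iu\theta x}$ on $\square$ subject to \eqref{2.5} and to periodic conditions on $\g_l$; thus $\inf\Sigma_\e\leqslant\inf\spec(\cT_\e(\theta_0))$. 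It therefore suffices to bound $\inf\spec(\cT_\e(\theta_0))$ from above by the Rayleigh quotient of the trial function $\phi:=\psi_0+\e s_*\,\psi_1\in\Dom(\cT_\e(\theta_0))$, where $\psi_1$ is the solution of \eqref{2.17} --- which exists precisely because the eigenfunctions $\psi_0^{(j)}$ were chosen to diagonalise $\cL_1$, so that the off-diagonal solvability conditions vanish while the diagonal one is absorbed into $\L_1$. Since $\psi_1\perp\ker(\Op_0(\theta_0)-\L_0)\ni\psi_0$ and $\|\psi_0\|_{L_2(\square)}=1$, one has $\|\phi\|_{L_2(\square)}^2=1+\e^2 s_*^2\|\psi_1\|_{L_2(\square)}^2$, which is already the denominator in \eqref{2.20}.

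\smallskip
\emph{The Rayleigh quotient.} Write $t:=\e s_*$. Using equation \eqref{2.17}, the order-$t$ term $-t\,e^{-\iu\theta_0 x}\cL_1 e^{\iu\theta_0 x}\psi_0$ produced by $\Op_0(\theta_0)\psi_1$ cancels the corresponding term in $e^{-\iu\theta_0 x}\cL(\e s_*)e^{\iu\theta_0 x}\psi_0$, leaving
\[
\cT_\e(\theta_0)\phi=\L_0(\psi_0+t\psi_1)+t\L_1\psi_0+t^2\big(e^{-\iu\theta_0 x}\cL_2 e^{\iu\theta_0 x}\psi_0+e^{-\iu\theta_0 x}\cL_1 e^{\iu\theta_0 x}\psi_1\big)+R,
\]
where $\|R\|_{L_2(\square)}\leqslant C|t|^3$ and $R$ depends continuously on $t$ through $\cL_3(t)$. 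Pairing with $\phi=\psi_0+t\psi_1$, using $\psi_1\perp\psi_0$, $\|\psi_0\|=1$, and noting that by \eqref{2.18} the number $\L_2$ is exactly the $\psi_0$-component of the order-$t^2$ bracket, one obtains
\[
(\cT_\e(\theta_0)\phi,\phi)_{L_2(\square)}=\L_0\big(1+t^2\|\psi_1\|^2\big)+t\L_1+t^2\L_2+t^3\widetilde{\L}_3(t),
\]
with $\widetilde{\L}_3$ real, continuous and bounded on compact intervals. Dividing by $\|\phi\|^2=1+t^2\|\psi_1\|^2$ and using the identity $(t\L_1+t^2\L_2)/(1+t^2\|\psi_1\|^2)=t\L_1+t^2\L_2-t^3(\L_1+t\L_2)\|\psi_1\|^2/(1+t^2\|\psi_1\|^2)$ brings the Rayleigh quotient into the exact form of the right-hand side of \eqref{2.20}, with $\L_3(t):=\widetilde{\L}_3(t)-(\L_1+t\L_2)\|\psi_1\|^2$. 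The variational principle then gives \eqref{2.20}; once more $\e=0$ is covered trivially.

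\smallskip
\emph{Main obstacle.} The two points that need care are: in the proof of \eqref{2.19}, making the relative bound genuinely uniform over all configurations --- routine, but the cell-by-cell summation must be combined cleanly with the elliptic a priori estimate --- and, in the proof of \eqref{2.20}, justifying the Floquet--Bloch reduction for the \emph{perturbed} periodic operator $\Op_\e(\xi_*)$: since $\cL_\e$ may carry the full order $2m$, the smallness of $\e$ has to be invoked to keep ellipticity, lower semiboundedness and compact fibre resolvents, after which the remainder terms must be tracked closely enough that the denominator comes out \emph{exactly} as $1+\e^2 s_*^2\|\psi_1\|^2$ and the residual contribution is honestly cubic in $\e$ with a continuous coefficient.
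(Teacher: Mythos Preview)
Your proof of \eqref{2.19} is correct and essentially parallel to the paper's: you argue via Weyl sequences where the paper writes out the Neumann series for the resolvent, but both arguments rest on the same uniform relative bound for $\cL_\e(\xi)$, combined with the elliptic a~priori estimate.

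For \eqref{2.20} you and the paper diverge. The paper works directly on $L_2(\Pi)$: it periodically extends the trial function $e^{\iu\theta_0 x}(\psi_0+\e s_*\psi_1)$ to all of $\Pi$, multiplies by a smooth cutoff $\chi_p$ supported in a box of side $\sim p$, computes the Rayleigh quotient of $\Op_\e(\xi_*)$ on this compactly supported test function, and sends $p\to\infty$; the boundary-layer contribution is $O(p^{-1})$ and disappears in the limit, leaving exactly the single-cell Rayleigh quotient that you compute. You instead invoke a Floquet--Bloch decomposition for the \emph{perturbed} periodic operator $\Op_\e(\xi_*)$ and pass directly to the fibre at $\theta_0$. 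This is conceptually cleaner and lands on the same cell calculation, but your stated obstacle is slightly misidentified: ellipticity and lower semiboundedness are not the delicate point (they survive by relative boundedness regardless of the order of $\cL_i$); rather, the $\cL_i$ need not be differential operators (the paper explicitly admits integral operators as examples), so the reference backing \eqref{2.12} does not apply verbatim and one must appeal to the general direct-integral machinery for translation-invariant operators. That machinery is standard and your route is valid; the paper's cutoff argument simply avoids the issue by never leaving $L_2(\Pi)$. Your Rayleigh-quotient algebra, including the exact denominator $1+\e^2 s_*^2\|\psi_1\|^2$, matches the paper's, and your $\L_3(t)=\widetilde{\L}_3(t)-(\L_1+t\L_2)\|\psi_1\|^2$ is consistent with the explicit formula \eqref{2.21}.
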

In fact, one can show the following explicit representation for $\L_3(t)$:
\begin{equation}\label{2.21}
\begin{aligned}
\L_3(t)=  &-(\L_1+t\L_2)\|\psi_1\|_{L_2(\square)}^2
+2 \RE ( \cL_2 e^{\iu \theta_0 x}\psi_0,  e^{\iu \theta_0 x}\psi_1)_{L_2(\square)}\\
 & + \big( (\cL_1+t\cL_2) e^{\iu \theta_0 x}\psi_1,  e^{\iu \theta_0 x}\psi_1\big)_{L_2(\square)}
\\
&+\big(\cL_3(t) e^{\iu \theta_0 x}\psi_0 (\psi_0+t\psi_1),e^{\iu \theta_0 x}(\psi_0+t\psi_1)\big)_{L_2(\square)}.
\end{aligned}
\end{equation}

\begin{remark}\label{rm2.2}
If we have several minimizing triples $\{\tht_0,i_0,s_*\}$, estimate (\ref{2.20}) is valid for each of them.
In this situation one should choose a triple, for which $s_* \L_1$ is minimal.
If all such quantities are same, then one should minimize $s_*^2\L_2$.
\end{remark}


For many types of periodic operators $\Op_0$ and generic perturbations $\cL_1$ the coefficient $\L_1$ does not vanish,
and one sees a linear shift of $\Sigma_\e$. Let us consider the case $\L_1=0$.
We will show that in this case, for many models,
the almost sure spectrum $\Sigma_\e$ must expand at least quadratically
 ---  i.e.~cubic, quartic, or weaker expansions cannot occur.

\begin{corollary}\label{c:1}
Assume that $\L_1=0$ and $\cL_2$ is a non-positive operator, cf.~(\ref{2.6}) and (\ref{2.16}).
Then
\[
\L_2 \leqslant - c_1 \|\psi_1\|_{L_2(\square)}^2
\]
where $c_1$ is the strictly positive spectral gap in (\ref{2.13}).
\end{corollary}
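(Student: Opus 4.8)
\emph{Strategy.} The plan is to look directly at the two summands of $\L_2$ in (\ref{2.18}). The contribution $(\cL_2 e^{\iu \theta_0 x}\psi_0, e^{\iu \theta_0 x}\psi_0)_{L_2(\square)}$ is non-positive by the assumption on $\cL_2$ --- note that $e^{\iu \theta_0 x}\psi_0\in\Ho^{2m}(\square,\g_\Pi)$, since $\psi_0$ satisfies (\ref{2.5}) and multiplication by the smooth function $e^{\iu \theta_0 x}$ preserves those homogeneous conditions --- so this term may simply be discarded. Hence everything reduces to proving
\[
(\cL_1 e^{\iu \theta_0 x}\psi_1, e^{\iu \theta_0 x}\psi_0)_{L_2(\square)} \leqslant -c_1\|\psi_1\|_{L_2(\square)}^2 .
\]

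\emph{Key identity.} To this end I would exploit the defining equation (\ref{2.17}) of $\psi_1$. Under the hypothesis $\L_1=0$ it becomes $(\Op_0(\theta_0)-\L_0)\psi_1 = -e^{-\iu \theta_0 x}\cL_1 e^{\iu \theta_0 x}\psi_0$, i.e. $\cL_1 e^{\iu \theta_0 x}\psi_0 = -e^{\iu \theta_0 x}(\Op_0(\theta_0)-\L_0)\psi_1$. Since $\cL_1$ is symmetric and multiplication by $e^{\iu \theta_0 x}$ is a unitary operator on $L_2(\square)$, this yields
\[
(\cL_1 e^{\iu \theta_0 x}\psi_1, e^{\iu \theta_0 x}\psi_0)_{L_2(\square)}
= (e^{\iu \theta_0 x}\psi_1, \cL_1 e^{\iu \theta_0 x}\psi_0)_{L_2(\square)}
= -\big(\psi_1, (\Op_0(\theta_0)-\L_0)\psi_1\big)_{L_2(\square)} ,
\]
which in passing also confirms that the first summand of (\ref{2.18}) is automatically real.

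\emph{Spectral gap.} It remains to see that the right-hand side is $\leqslant -c_1\|\psi_1\|_{L_2(\square)}^2$, i.e. that $\big(\psi_1, (\Op_0(\theta_0)-\L_0)\psi_1\big)_{L_2(\square)} \geqslant c_1\|\psi_1\|_{L_2(\square)}^2$. This is precisely where (\ref{2.13})--(\ref{2.15}) come in: $\L_0=\inf\spec(\Op_0(\theta_0))=E_0(\theta_0)$, the remainder of $\spec(\Op_0(\theta_0))$ lies in $[\L_0+c_1,\infty)$ by the uniform gap bound, and $\psi_1$ is orthogonal to $\ker(\Op_0(\theta_0)-\L_0)$ by construction; the spectral theorem then gives the stated quadratic-form lower bound. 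Chaining the three steps proves $\L_2 \leqslant -c_1\|\psi_1\|_{L_2(\square)}^2$.

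I do not expect a genuine obstacle here: the argument is a short manipulation built entirely from objects already in place. The only points that deserve a line of justification are that $\psi_1$ belongs to the form domain of $\Op_0(\theta_0)$ (true, since by (\ref{2.17}) it even lies in the operator domain), and the bookkeeping around the gauge conjugation by $e^{\iu \theta_0 x}$, which has modulus one and hence leaves all $L_2(\square)$ norms and inner products unchanged while intertwining $\Op_0(\theta_0)$ with the corresponding quasi-periodic fibre of $\Op_0$. If one later wished to upgrade the inequality to a strict or quantitative one, one would additionally need to control the size of $(\cL_2 e^{\iu \theta_0 x}\psi_0, e^{\iu \theta_0 x}\psi_0)_{L_2(\square)}$ and the spread of $\psi_1$ over the higher eigenspaces, but for the stated estimate this is unnecessary.
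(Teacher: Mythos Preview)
Your proposal is correct and follows essentially the same approach as the paper's own proof: discard the non-positive $\cL_2$ term, use equation (\ref{2.17}) with $\L_1=0$ to rewrite $(\cL_1 e^{\iu \theta_0 x}\psi_1, e^{\iu \theta_0 x}\psi_0)_{L_2(\square)}$ as $-\big((\Op_0(\theta_0)-\L_0)\psi_1,\psi_1\big)_{L_2(\square)}$, and then invoke the spectral gap (\ref{2.13}) together with the orthogonality of $\psi_1$ to the $\L_0$-eigenspace. If anything, your write-up is slightly more careful than the paper's (which writes the middle step as an equality $-(E_2-\L_0)\|\psi_1\|^2$ rather than invoking the spectral theorem for the quadratic-form inequality).
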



\begin{remark} \label{r:1}
Thus the question arises, in what situations we can ensure that $\psi_1$ does not vanish,
i.e.~that $e^{-\iu \theta_0 x} \cL_1 e^{\iu \theta_0 x}\psi_0$ is not the zero vector.
This is for instance the case if $\Op_0$ is the pure Laplacian and $\cL_1$ a multiplication operator $V$.
In this case $\theta_0=0$, $\Op_0(0)$ is the Laplacian with periodic boundary conditions, $\psi_0$
is the normalized constant function, so that $V\psi$ is only identically zero if $V$ itself is trivial.
More generally, $\Op_0$ can be any operator satisfying a unique continuation property for eigenfunctions
or the Harnack inequality for the ground state.

Likewise, $\psi_1$ cannot vanish if $\Op_0$ satisfies the Harnack inequality and
$e^{-\iu \theta_0 x} \cL_1 e^{\iu \theta_0 x}$ is a positivity preserving operator.\footnote{An $L_2$-function is called positive if $f$ is nonnegative almost everywhere and does not
vanish identically. A self-adjoint operator $A$ on an $L_2$-space is called positivity preserving if $Af$ is positive
whenever $f$ is in the domain of $A$ and positive.}
\end{remark}

Our next result provides a lower bound for $\inf\Sigma_\e$.
First we need to introduce additional notations and assumptions:
Given $u,v\in \Ho^{2m}(\square,\g_\Pi)$, by integration by parts it is easy to convince oneself that
\begin{equation}\label{2.23}
\begin{aligned}
\sum\limits_{
\genfrac{}{}{0 pt}{}{\a,\b\in \mathds{Z}_+^d}{|\a|,|\b|\leqslant m}
} & (-1)^{|\a|} \big( (\p + \iu \theta_0)^\a A_{\a\b} (\p + \iu \theta_0)^\b  u,v\big)_{L_2(\square)}
\\
&=\sum\limits_{j=1}^{m} (\cB_{2m-j}u,\cB_{j-1} v)_{L_2(\g_l)}
+ \sum\limits_{
\genfrac{}{}{0 pt}{}{\a,\b\in \mathds{Z}_+^d}{|\a|,|\b|\leqslant m}
} \big( A_{\a\b} (\p + \iu \theta_0)^\b  u, (\p + \iu \theta_0)^\a v\big)_{L_2(\square)},
\end{aligned}
\end{equation}
where $\cB_j=\cB_j(x,\p)$ are linear differential operators of order at most $j$ with continuous coefficients defined on $\overline{\g_l}$.
Since the functions $A_{\a\b}$ are $\square$-periodic, the above formula remains true if we replace $\square$ by $\square_k$.
We make the following assumption:

\begin{enumerate}\def\theenumi{(A\arabic{enumi})}
\item \label{A1} There are real-valued functions  $b_j\in C(\overline{\p\square})$ such that
\begin{equation}\label{2.25}
b_j=\frac{\cB_{2m-j}\psi_0}{\cB_{j-1}\psi_0} \chi_{\{\cB_{j-1}\psi_0\neq0\}}
\quad \text{ on} \quad  \g_l,\quad j=1,\ldots,m,
\end{equation}
where $\chi_\natural$ is the characteristic function of a set $\natural$.
Assume also that the coefficients of the boundary operators
$\cB_{2m-j}-b_j\cB_{j-1}$ belong to $C^j(\g_l)$.
\end{enumerate}

On $\Ho^m(\square,\g_l)$  we introduce the sesquilinear form
\begin{equation}\label{2.26}
\widehat{\fm}_0(u,v):= \sum\limits_{
\genfrac{}{}{0 pt}{}{\a,\b\in \mathds{Z}_+^d}{|\a|,|\b|\leqslant m}
} \big( A_{\a\b} (\p + \iu \theta_0)^\b  u, (\p + \iu \theta_0)^\a v\big)_{L_2(\square)} + \sum\limits_{j=1}^{m} (b_j \cB_{j-1}u,\cB_{j-1}v)_{L_2(\g_l)}.
\end{equation}
Thanks to conditions (\ref{2.1}) and assumption \ref{A1}, this form is symmetric, lower-semibounded and closed.
By $\widehat{\Op}_0$ we denote the self-adjoint operator in $L_2(\square)$ associated with the form $\widehat{\fm}_0$.
This is the operator
\begin{equation}\label{2.27}
\widehat{\Op}_0= \sum\limits_{
\genfrac{}{}{0 pt}{}{\a,\b\in \mathds{Z}_+^d}{|\a|,|\b|\leqslant m}
} (-1)^{|\a|}(\p + \iu \theta_0)^\a A_{\a\b} (\p + \iu \theta_0)^\b,
\end{equation}
in $L_2(\square)$ subject to the Dirichlet condition on $\g_\Pi$ and to the condition
\begin{equation}\label{2.28}
(\cB_{2m-j}-b_j \cB_{j-1})u=0\quad \text{on}\quad \g_l.
\end{equation}
The domain of $\widehat{\Op}_0$ consists of the functions in $\Ho^{2m}(\square,\g_{\Pi})$
satisfying the above boundary conditions on $\g_l$.

It is straightforward to check that $\psi_0$ is an eigenfunction of $\widehat{\Op}_0$ associated with the eigenvalue $\L_0$. Moreover, we make one more assumption
\begin{enumerate}\def\theenumi{(A\arabic{enumi})}\setcounter{enumi}{1}
\item \label{A2} The bottom of the spectrum of $\widehat{\Op}_0$ is a simple eigenvalue  and equal to $\L_0$.
\end{enumerate}

Let $\widehat{\psi}_1\in\Dom(\widehat{\Op}_0)$ be the solution to the equation
\begin{equation*}
\big(\widehat{\Op}_0-\L_0\big)\widehat{\psi}_1=-e^{-\iu \theta_0 x}\cL_1e^{\iu \theta_0 x}\psi_0 + \L_1\psi_0
\end{equation*}
such that this solution is orthogonal to $\psi_0$ in $L_2(\square)$. We denote
\begin{equation}\label{2.29}
\widehat{\L}_2:=(\cL_2e^{ \iu \theta_0 x}\psi_0,e^{ \iu \theta_0 x}\psi_0)_{L_2(\square)} + (\cL_1 e^{ \iu \theta_0 x}\widehat{\psi}_1,e^{ \iu \theta_0 x}\psi_0)_{L_2(\square)}.
\end{equation}

The main result concerning a lower bound is provided by the following theorem.

\begin{theorem}\label{th2.3}
Assume \ref{A1} and \ref{A2}. For all sufficiently small $\e\geqslant 0$ the estimate
\begin{equation}\label{2.30}
\inf\Sigma_\e\geqslant 
\L_0+\e s_* \L_1+\e^2 s_*^2\widehat{\L}_2
 - C\e^3
\end{equation}
holds true, where $C$ is a non-negative constant independent of $\e$.
\end{theorem}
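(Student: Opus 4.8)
The plan is to establish the lower bound \eqref{2.30} via a Combes--Thomas-type reduction to a single periodicity cell, followed by a variational (Temple-type) inequality on that cell. Since by Theorem \ref{th2.1} the almost sure spectrum $\Sigma_\e$ is the closure of the union of spectra of $2^N\G$-periodic realizations, and each such realization decomposes by Floquet--Bloch over a larger torus, it suffices to bound from below $\inf\spec(\Op_\e(\xi,\vartheta))$ uniformly over all periodic configurations $\xi\colon\G\to\supp\mu$ and all quasimomenta $\vartheta$, where $\Op_\e(\xi,\vartheta)$ is the fibered operator on the corresponding fundamental domain with the twisted boundary conditions. The first step is therefore to fiber the operator and to replace the study of $\inf\spec$ by the study of the lowest Rayleigh quotient $\fm_{\e,\vartheta}^\xi(u,u)/\|u\|^2$ over test functions $u$ on the fundamental domain.

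The second step is the cell-wise lower bound. For a normalized $u$, write $u$ restricted to each cell $\square_k$ and use the periodicity formula \eqref{2.23} together with the gauge transform $u\mapsto e^{-\iu\theta_0 x}u$ to rewrite the quadratic form, cell by cell, in terms of the operators $\widehat{\Op}_0$ and $\cL(\e\omega_k)$: the crucial point is that assumption \ref{A1} is precisely what makes the boundary terms $\sum_j(\cB_{2m-j}u,\cB_{j-1}u)_{L_2(\g_l)}$ absorbable into the symmetric form $\widehat\fm_0$ of \eqref{2.26} up to a controllable remainder (the interface contributions between neighbouring cells telescope or cancel because the $b_j$ are the same on both sides of $\g_l$). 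After this rewriting, on each cell we face a one-cell operator of the form $\widehat\Op_0+\cL(\e\omega_k)$, whose ground state energy we must bound below. Here we use \ref{A2}: $\widehat\Op_0$ has a simple lowest eigenvalue $\L_0$ with eigenfunction $\psi_0$ and a spectral gap of size at least $c_1$ (inherited from \eqref{2.13}). A second-order perturbation expansion --- rigorously justified by a Feshbach/Schur complement argument or by Temple's inequality, using $\psi_0+\e\omega_k\widehat\psi_1$ as an approximate ground state and the gap to control the orthogonal complement --- gives
\begin{equation*}
\inf\spec\big(\widehat\Op_0+\cL(\e s)\big)\geqslant \L_0+\e s\L_1+\e^2 s^2\widehat\L_2-C\e^3
\end{equation*}
uniformly for $s\in[s_-,s_+]$, with $\L_1,\widehat\L_2$ as in \eqref{2.16}, \eqref{2.29}. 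Optimizing over the finitely many extreme values $s\in\{s_-,s_+\}$ and over the index $i_0$ yields the single-cell bound with the minimizing $s_*$; and since $\L_1$ was arranged so that $s_*\L_1$ is the minimum over all extreme choices, the bound holds with the $s_*$ of the theorem statement.

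The third step is to patch the cell-wise bounds into a global bound. Summing the per-cell inequalities over $k$ and using $\sum_k\|u\|^2_{L_2(\square_k)}=\|u\|^2$, the main terms combine to $\L_0+\e s_*\L_1+\e^2 s_*^2\widehat\L_2$ times $\|u\|^2$, while the $O(\e^3)$ errors sum to $C\e^3\|u\|^2$ provided the per-cell error constant $C$ is uniform in $k$, $\omega_k$ and $\vartheta$ --- which it is, because the $\cL_i$ are bounded operators $\Ho^{2m}(\square,\g_\Pi)\to L_2(\square)$ with norms independent of the cell, $\cL_3(t)$ is uniformly bounded, and the twisting by $\vartheta$ only shifts lower-order terms. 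One must also check that the cross-cell (interface) terms not already absorbed by \ref{A1} are genuinely nonnegative or at worst $O(\e^3)$; this is where the smoothness hypothesis on the coefficients of $\cB_{2m-j}-b_j\cB_{j-1}$ in \ref{A1} is used, to trade an interface term against an interior $W_2^m$ seminorm with small coefficient. Taking the infimum over $u$ and over $\xi,\vartheta$, then $N\to\infty$ and the closure, gives \eqref{2.30}.

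\textbf{Main obstacle.} The genuinely delicate point is controlling the boundary/interface terms in \eqref{2.23} uniformly: one must show that after the gauge transformation the sum of the $(\cB_{2m-j}u,\cB_{j-1}u)_{L_2(\g_l)}$ contributions over all cells either reassembles into the closed form $\widehat\fm_0$ (plus its perturbation) or is dominated by $\e^3\|u\|^2$ plus an arbitrarily small multiple of the elliptic form --- and that this domination is uniform in the quasimomentum $\vartheta$ and in the configuration. Handling the high-order ($m>1$) boundary operators, with the ratios $b_j$ defined only where $\cB_{j-1}\psi_0\neq0$ (hence the characteristic function in \eqref{2.25}), requires care: on the set where $\cB_{j-1}\psi_0$ vanishes one needs a separate argument showing the corresponding boundary contribution is small, which is the reason for the regularity assumption on $\cB_{2m-j}-b_j\cB_{j-1}$. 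Everything else --- the ergodic/Floquet reduction, the finite-dimensional perturbation theory on the single cell, the bookkeeping of $\e$-powers --- is routine given the hypotheses and the results already established in the excerpt.
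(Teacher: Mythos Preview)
Your overall strategy --- gauge-transform by $e^{-\iu\theta_0 x}$, decouple into single cells, and bound each cell from below by perturbation theory for $\widehat\Op_0+e^{-\iu\theta_0 x}\cL(\e s)e^{\iu\theta_0 x}$ --- is the same as the paper's, but you overcomplicate two points and misidentify the crux.

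First, the Floquet--Bloch detour through $2^N\G$-periodic configurations and an auxiliary quasimomentum $\vartheta$ is superfluous. By Lemma~\ref{lm3.2} one has $\Sigma_\e=\bigcup_{\xi\in\Om}\spec(\Op_\e(\xi))$, so it suffices to bound $\inf\spec(\Op_\e(\xi))$ from below uniformly in $\xi\in\Om$, working directly with the Rayleigh quotient over $\Dom(\Op_0)$; no fibering and no $\vartheta$-uniformity are needed.

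Second, and more importantly, the interface mechanism is not what you describe. You assert that ``the $b_j$ are the same on both sides of $\g_l$'' and treat the interface contributions as an error to be dominated by $O(\e^3)$ or by a small multiple of the elliptic form. The paper instead first \emph{proves} from \eqref{2.23} and the periodicity of $A_{\a\b}$, $\psi_0$ the antisymmetry
\[
b_j\big|_{\g_l^-}=-\,b_j\big|_{\g_l^+}
\]
for every pair of opposite faces (identity \eqref{4.18}). Consequently, for any $u\in\Dom(\Op_0)$ the global sum $\sum_{k\in\G}\sum_{j=1}^m(b_j\cB_{j-1}\cS(-k)u,\cB_{j-1}\cS(-k)u)_{L_2(\g_l)}$ vanishes \emph{exactly}: each shared face appears once from each adjacent cell with opposite sign. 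One may therefore insert this zero into the Rayleigh quotient and obtain an \emph{equality} between $\inf\spec(\Op_\e(\xi))$ and a ratio whose numerator is already the sum over $k$ of $\widehat\fm_0(\cS(-k)u,\cS(-k)u)+(\cL(\e\xi_k)e^{\iu\theta_0 x}\cS(-k)u,e^{\iu\theta_0 x}\cS(-k)u)_{L_2(\square)}$. The lower bound then follows by enlarging the variational domain from $\Dom(\Op_0)$ to $\bigoplus_{k}\Ho^{2m}(\square_k,\g_\Pi^k)$ --- a bracketing step --- which decouples the cells and reduces everything to $\inf_{s\in[s_-,s_+]}\l_\e(s)$ with $\l_\e(s)=\inf\spec\big(\widehat\Op_0+e^{-\iu\theta_0 x}\cL(\e s)e^{\iu\theta_0 x}\big)$.

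So your ``main obstacle'' dissolves: there are no residual interface terms to estimate, no trace inequalities trading boundary terms against interior seminorms, and the $C^j$-regularity of the coefficients of $\cB_{2m-j}-b_j\cB_{j-1}$ in \ref{A1} is used only to ensure that $\widehat\Op_0$ is a bona fide self-adjoint operator with domain in $\Ho^{2m}(\square,\g_\Pi)$, not to control remainders. The single-cell asymptotics $\l_\e(s)=\L_0+\e s\L_1+\e^2 s^2\widehat\L_2+O(\e^3)$ is then ordinary analytic perturbation theory for the simple isolated eigenvalue guaranteed by \ref{A2}, uniform on the compact interval $[s_-,s_+]$; Temple or Feshbach machinery is not needed.
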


An immediate corollary of Theorems~\ref{th2.2} and \ref{th2.3} is that under Assumptions~\ref{A1},~\ref{A2} we have the asymptotics
\begin{equation}\label{2.31}
\inf\spec(\Op_\e(\om))=\L_0+\e s_*\L_1+O(\e^2)\quad\PP-a.s.
\end{equation}
{Moreover, if $\widehat{\L}_2=\L_2$ for at least one of the minimizing triples $(\theta_0,i_0,s_*)$, the asymptotics is even more precise}
\begin{equation}\label{2.32}
\inf\spec(\Op_\e(\om))=\L_0+\e s_*\L_1+\e^2 s_*^2\L_2+O(\e^3)\quad\PP-a.s.
\end{equation}
Note that the additional assumptions~\ref{A1},~\ref{A2} required for the lower bound (\ref{2.30}) are not very restrictive.
A simple example is provided by the  operator $\Op_0$ in $\Pi=\mathds{R}^d$ in the case $A_{\a\b}=0$ for $|\a|<m$ or $|\b|<m$. Then the ellipticity condition in (\ref{2.1}) implies that $(\Op_0 u,u)_{L_2(\mathds{R}^d)}\geqslant 0$.
The lowest eigenvalue of the operator $\Op_0(\theta)$ attains its minimum $\L_0=0$ for $\theta=0$; the associated eigenfunction $\psi_0$ is just a constant: $\psi_0=|\square|^{-\frac{1}{2}} \mathbf{1}$, where $\mathbf{1}$ stands for the function being $1$ everywhere in $\square$.  This eigenvalue is simple.
All the associated functions $b_j$ vanish identically  and the form $\widehat{\fm}_0$ is given by the formula
\begin{equation*}
\widehat{\fm}_0(u,v)= \sum\limits_{ \genfrac{}{}{0 pt}{}{\a,\b\in\mathds{Z}_+^d}{|\a|,|\b|=m}} (A_{\a\b} \p^\b u, \p^\a v )_{L_2(\square)}.
\end{equation*}
The quadratic form $\widehat{\fm}_0(u,u)$ is non-negative and its value vanishes only for the  function $\psi_0$.
It means that in this case both Assumptions~\ref{A1},~\ref{A2} are satisfied.
The constant  $\L_1$ is given by the formula
\begin{equation*}
\L_1=\frac{1}{|\square|} (\cL_1\mathbf{1},\mathbf{1})_{L_2(\square)}.
\end{equation*}
The number $s_*$ is fixed by the inequalities
\begin{equation*}
s_*\L_1\leqslant s_+\L_1,\quad s_*\L_1\leqslant s_-\L_1.
\end{equation*}
The asymptotics (\ref{2.31}) holds true.

It should be also mentioned that we choose Dirichlet boundary condition (\ref{2.5}) just for simplicity.
The type of  boundary condition is not used in our proofs and the Dirichlet condition can be replaced by
any other boundary condition ensuring the self-adjointness of the operator $\Op_0$ on an appropriate subspace of $W_2^{2m}(\Pi)$.
The operators $\cL_i$ are to be symmetric on functions in $W_2^{2m}(\square)$ satisfying the boundary condition on $\g_\Pi$ involved in the definition of $\Op_0$. Then the main results remain true.

\section{Examples}

In this section we discuss examples of unperturbed operators $\Op_0$ and perturbations $\cL_\e$.
First of all we stress that
all the examples we discuss below are considered in an arbitrary periodic domain $\Pi$.
Classical examples of the differential operator $\Op_0$ are Schr\"odinger operators
\begin{equation*}
\Op_0=-\D+V(x),
\end{equation*}
magnetic Schr\"odinger operators
\begin{equation*}
\Op_0=(\iu\nabla+A)^2,
\end{equation*}
operators with a variable metric
\begin{equation*}
\Op_0=-\sum\limits_{i,j=1}^{d} \frac{\p}{\p x_i } A_{ij} \frac{\p}{\p x_j},
\end{equation*}
or the bi-Laplacian
\begin{equation*}
\Op_0=\Delta^2.
\end{equation*}
In the above examples all the coefficients $V$, $A$, $A_{ij}$ are supposed to be periodic  with respect to   $\G$ and smooth enough.

One more class of examples are Schr\"odinger operators with $\d$-interaction on a periodic system of disjoint bounded manifolds. Namely, let $M_0\subset \square$ be a given $C^3$-manifold
{of codimension one without boundary}.
By $M$ we denote the periodic system of manifolds obtained by the shifts of $M_0$ along $\G$. We consider the operator
\begin{equation}\label{5.1}
\Op_0=-\D+V\quad \text{in}\quad \Pi
\end{equation}
subject to the Dirichlet boundary condition on $\p \Pi$ and to the boundary conditions
\begin{equation}\label{5.2}
[u]_M=0,\quad \left[\frac{\p u}{\p\nu}\right]_M=b u|_{M}=0,
\end{equation}
where $[\cdot]_M$ denotes the jump of a function on $M$, $\nu$ is the normal vector to $M$ and $b\in C^2(M)$ is a given function periodic  with respect to   lattice $\G$.
Of course, the above introduced operator does not fit our assumptions
since its domain is not $\Ho^2(\Pi,{\p\Pi})$. But it was shown in \cite[Sect. 8, Ex. 5]{Bo07-AHP}, \cite[Sect. 8, Ex. 5]{Bo07-MPAG}
that there exists an explicit unitary transformation that reduces operator (\ref{5.1}), (\ref{5.2}) into a self-adjoint differential operator with domain $\Ho^2(\Pi,{\p\Pi})$.
This new operator has the same spectrum as the operator introduced in (\ref{5.1}) and  (\ref{5.2}).

The class of examples of operators $\cL_i$  is wider than for the operator $\Op_0$.
The reason is that the operators $\cL_i$ are not necessarily differential ones.
Of course, symmetric differential operators are standard examples for $\cL_i$:
\begin{equation*}
\cL_i=\sum\limits_{
\genfrac{}{}{0 pt}{}{\a,\b\in \mathds{Z}_+^d}{|\a|,|\b|\leqslant m}} (-1)^{|\a|}\p^\a B_{\a\b}\p^b,
\end{equation*}
where $B_{\a\b}$  are given functions  defined on $\square$ satisfying the conditions:
\begin{equation*}
 B_{\a\b}\in W_\infty^{|\a|}(\square),
\quad \overline{B_{\b\a}}=B_{\a\b}.
\end{equation*}
The values on $\g_l$ are to be chosen so that by the integration by parts we get
\begin{equation*}
 \sum\limits_{
\genfrac{}{}{0 pt}{}{\a,\b\in \mathds{Z}_+^d}{|\a|,|\b|\leqslant m}} (-1)^{|\a|} (\p^\a B_{\a\b}\p^b u,v)_{L_2(\square)}=\sum\limits_{
\genfrac{}{}{0 pt}{}{\a,\b\in \mathds{Z}_+^d}{|\a|,|\b|\leqslant m}} ( B_{\a\b}\p^b u, \p^\a v)_{L_2(\square)},\quad u,v\in\Ho^{2{m}}(\square,\g_\Pi),
\end{equation*}
and all the boundary integrals over $\g_l$ vanish. In particular, this
includes potential and magnetic fields. In this case our results are applicable to the perturbation by a random potential and a random magnetic field.

Our next example is an integral operator:
\begin{equation*}
(\cL_i u)(x)=\int\limits_{\square} K(x,y)u(y)\di y,
\end{equation*}
where the kernel $K(x,y)$ is such that $\cL_i$ is a symmetric operator. A sufficient condition is, for instance,
\begin{equation*}
K(x,y)=\overline{K(y,x)}\quad\text{and}\quad \int\limits_{\square\times\square} |K(x,y)|^2 \di x\di y<\infty.
\end{equation*}
Also, more complicated examples like integro-differential operators or pseudodifferential operators are possible.

The next class of examples concerns the situation where a considered operator fits our assumptions
after certain transformations.
Here the first example is the above mentioned $\d$-interaction, see (\ref{5.1}), (\ref{5.2}). As $\cL_i$, we can also choose a $\d$-interaction of such type.
One just needs to replace the second boundary condition in (\ref{5.2}) by
\begin{equation*}
\left[\frac{\p u}{\p\nu}\right]_{M_k} + \e \omega_k b u\big|_{M_k}=0,
\end{equation*}
where $M_k$ is the shift of $M_0$ by $k\in\G$. Then we deal with a random $\d$-potential.
And again by applying the approach of \cite[Sect. 8, Ex. 5]{Bo07-AHP}, \cite[Sect. 8, Ex. 5]{Bo07-MPAG}
we can reduce the operator with  a $\d$-interaction to a differential operator with the same spectrum.
This differential operator satisfies the representation (\ref{2.8a}).

The next example is a small random deformation of a boundary. There are various ways how to introduce it, we dwell only on one of them.
Let $\nu$ be the outward normal to $\p\Pi$, $\z$ be the distance measured along $\nu$ and $\rho\in C^2(\g_\Pi)$ be a given function vanishing in the vicinity of $\overline{\g_l}\cap\p\Pi$.
We introduce a domain $\Pi_\e(\om)$ whose boundary is defined as
\begin{equation*}
\p\Pi_\e(\om):=\{x:\, \theta=\e G(\om,x)\},\quad G(\om,x):=\sum\limits_{k\in\G} \omega_k \rho(x-k).
\end{equation*}
In the domain $\Pi_\e(\om)$ we consider the operator
\begin{equation*}
\Op_\e(\om):=\sum\limits_{
\genfrac{}{}{0 pt}{}{\a,\b\in \mathds{Z}_+^d}{|\a|,|\b|\leqslant m}
} (-1)^{|\a|}\p^\a A_{\a\b} \p^\b
\end{equation*}
subject to the Dirichlet condition with coefficients $A_{\a\b}$ satisfying (\ref{2.1}). As in \cite[Sect. 8, Ex. 4]{Bo07-MPAG},
one can construct a mapping of the domain $\Pi_\e(\om)$  onto $\Pi$ and a unitary transformation of $\Op_\e(\om)$
such that
the transformed  perturbed operator satisfies representation (\ref{2.8a}) and has the same spectrum.

Our next two examples are borrowed from \cite{BKSh15}.
Let $\Wl=\Wl(x)$ be a continuous compactly supported function defined on $\mathds{R}^d$, and $\Ws=\Ws(x,\z)$, $\z=(\z_1,\ldots,\z_d)$,
stands for a function in $\square\times\mathds{R}^{d}$
being  1-periodic  with respect to   each of the variables $\z_i$, $i=1,\ldots,d$, having a zero mean
\begin{equation*}
\int\limits_{(0,1)^{d}} \Ws(x,\xi)\di\xi=0\quad \text{for each} \quad x\in\square,
\end{equation*}
and compactly supported  with respect to   $x$:
\begin{equation*}
\supp \Ws(\cdot,\xi)\subseteq Q\subset \square \quad\text{for each}\quad \xi\in\mathds{R}^d,
\end{equation*}
where $Q$ is a some fixed set. We assume the following smoothness for the function $\Ws$:
\begin{equation*}
\frac{\p^{|\a|+|\b|}\Ws}{\p x^\a \p\xi^\b} \in C(\square\times\mathds{R}^d),\quad \a,\b\in\ZZ_+^d,\quad |\a|\leqslant 3, \quad |\b|\leqslant 1.
\end{equation*}
 We let
\begin{align*}
&\Wloc(x',\e):=\e^{-a} \Wl\left(\frac{x'}{\e}\right), \quad
 \Wosc(x,\e):=\e^{-a} \Ws\left(x,\frac{x}{\e}\right),
 \\
 & \Wloc(x',0):=0,\qquad \Wosc(x',0):=0,
\end{align*}
where $0\leqslant a<1$ is a given number.
The potential $\Wloc$ has {large} values due to the presence of the factor $\e^{-a}$
but is localized on a set of diameter $\e$. The potential $\Wosc$ has also {large} values,
is localized on set $Q$ and oscillates fast due to the presence of $\frac{x}{\e}$ in its definition.

The perturbed operators are introduced as
\begin{equation*}
\Op_\e(\om):=-\D+ \sum\limits_{k\in\G} \Wloc(\cdot-k,\e\omega_k),\quad \Op_\e(\om):=-\D+\sum\limits_{k\in\G} \Wosc(\cdot-k,\e \omega_k).
\end{equation*}
It was shown in  \cite{BKSh15} that there exist unitary transformations reducing each of the above perturbed operators to operators satisfying representation (\ref{2.8a}).

We also stress that any combination of the above perturbed operators is also covered by out general results. For instance, one can consider a combination of a random magnetic field with a random deformation of a boundary, a random integral operator and a $\d$-interaction, a fast oscillating potential and a potential localized on a small set, etc.

\section{Proof of Theorem~\ref{th2.1}}
To prove statement (\ref{2.9}), we follow the ideas of \cite{KirschM-82a}.
In accordance with Theorem~1 in that paper, it is sufficient to check the following conditions:
\begin{enumerate}
\item \textsl{Measurability.}
  For each $z\in\mathds{C}\setminus\mathds{R}$ the mapping $\Om\ni \{\xi_k\}_{k\in\G}\mapsto (\Op_\e(\xi)-z)^{-1}$ defined on our product probability space $(\Om,\mathfrak{F},\PP)$, where $\mathfrak{F}:=\bigotimes_\G \mathfrak{B}$ is the product $\si$-algebra, is weakly measurable. Namely, for each $u,v\in L_2(\Pi)$ the mapping $\{\xi_k\}_{k\in\G}\mapsto \big((\Op_\e(\xi)-z)^{-1}u,v\big)_{L_2(\Pi)}$ is $\PP$-measurable.
\item \textsl{Homogeneity.}
  There exists an index set $I$, a family of measure preserving transformations $T_i\colon \Om\to\Om$, $i\in I$,
  and a family of unitary operators $U_i$, $i\in I$, on $L_2(\Pi)$ satisfying for all $i \in I$
    \begin{equation} \label{eq:covariance}
    \Op_\e( T_i \xi)=U_i \Op_\e(\xi) U_i^*.
    \end{equation}
\item \textsl{Ergodicity.}
    Any $A \in \mathfrak{F}$ such that $T_i^{-1}A=A$ for each $i\in I$ satisfies  $\PP(A)=0$ or $\PP(A)=1$.
\end{enumerate}
We check the measurability following the argument of Proposition~6 in \cite{KirschM-82a}.
First we observe that by the continuity of $\cL_3(t)$ in $t$, the scalar function $t\mapsto (\cL_3(t)u,v)_{L_2(\square)}$ is measurable for each $u\in\Ho^{2m}(\square,\g_\Pi)$, $v\in L_2(\square)$.
Then we represent the resolvent of $\Op_\e(\xi)$ as
\begin{align*}
\big(\Op_\e(\xi)-z\big)^{-1}=&(\Op_0+\cL_\e(\xi)-z)^{-1}
\\
=&(\Op_0-z)^{-\frac{1}{2}} \big(I+(\Op_0-z)^{-\frac{1}{2}}\cL_\e(\xi)(\Op_0-z)^{-\frac{1}{2}}\big)^{-1} (\Op_0-z)^{-\frac{1}{2}}.
\end{align*}

By the lemma following Proposition~6 in \cite{KirschM-82a} the operator $\big(I+(\Op_0-z)^{-\frac{1}{2}}\cL_\e(\xi)(\Op_0-z)^{-\frac{1}{2}}\big)^{-1}$ is measurable and it implies the desired measurability of $\Op_\e(\xi)$.

To show homogeneity, we let
$I:=\G$, $T_i(\{\xi_k\}_{k\in\G}):=\{\xi_{k+i}\}_{k\in\G}$, $U_i:=\cS(i)$.
It is obvious that the operators $\cS(i)$ are unitary and $\cS^*(i)=\cS^{-1}(i)=\cS(-i)$. Employing definition (\ref{2.8a}) of operator $\Op_\e(\om)$ and the obvious identity $\cS(k)\cS(i)=\cS(k+i)$, by straightforward calculations we see that
Condition (\ref{eq:covariance}) is satisfied.
Since the marginal distributions of the product measure $\PP$ are all equal, the mappings $T_i$ are measure preserving, namely, $\PP(T_i^{-1}(A))=\PP(A)$ for each $i\in I$, $A\in\mathcal{F}$.
Moreover, the product structure of $\PP$ implies that the family $T_i, i\in I$ is mixing, and in particular ergodic.
Thus there exists indeed a set $\Sigma_\e\subseteq\RR$ which is almost surely the spectrum of $\Op_\e(\om)$.
In particular,  $\Sigma_\e$ is closed.

In order to prove (\ref{2.10}), we adapt a similar proof in \cite{KirschM-82b} (see Theorems~3,~4 and Lemma~2 in this paper).
Given $N\in\NN$, we denote
\begin{equation*}
\G_N:=\{x\in\mathds{R}^d:\, x=z_1 e_1 +\ldots + z_{d_1}e_{d_1},\,z_i\in \mathds{Z},\, |z_i|\leqslant N\},
\end{equation*}
and $\Pi_N$ is the interior of $\overline{\bigcup\limits_{k\in\G_N}\square_k}$. For $\xi,\om\in\Omega$, $q\in\G$, we let
\begin{equation}\label{3.0}
D_N(\xi,\om,q):=\left(\sum\limits_{k\in\G_N} \|\cL(\e\xi_k)-\cS(q)\cL(\e\omega_k) \cS(-q)\|_{\Ho^{2m}(\square_k,\g_\Pi^k)\to L_2(\square_k)}\right)^{\frac{1}{2}},
\end{equation}
where $\|\cdot\|_{X\to Y}$ stands for the norm of an operator acting from Hilbert space $X$ into Hilbert space $Y$. We introduce the set
\begin{equation}\label{3.4}
B_\xi:=\left\{\om\in\Omega:\, \forall\; N\in \NN \  \forall\; p \in \NN \
\exists\;  q(N,p,\xi)\in\G:\, D_N(\xi,\om,q)<\frac{1}{p} \right\}.
\end{equation}

\begin{lemma}\label{lm3.1}
For each $\xi\in\Om$ we have $\PP(B_\xi)=1$.
\end{lemma}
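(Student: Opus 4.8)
The plan is to show that for a fixed configuration $\xi$, almost every configuration $\om$ contains, within arbitrarily large boxes $\G_N$, approximate copies of the finite pattern $(\xi_k)_{k\in\G_N}$ — this is a Borel–Cantelli / ergodic-recurrence argument analogous to Lemma~2 in \cite{KirschM-82b}. First I would fix $N\in\NN$ and $p\in\NN$ and reduce the problem to showing that the event $B_{\xi,N,p}:=\{\om:\ \exists\, q\in\G,\ D_N(\xi,\om,q)<\tfrac1p\}$ has full measure; since $B_\xi=\bigcap_{N}\bigcap_{p}B_{\xi,N,p}$ is a countable intersection, $\PP(B_\xi)=1$ follows once each $B_{\xi,N,p}$ has probability one.

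The key analytic input is that the operator norm $\|\cL(\e\xi_k)-\cS(q)\cL(\e\om_{k+q})\cS(-q)\|$ appearing in \eqref{3.0} — note $\cS(q)\cL(\e\om_k)\cS(-q)$ shifted by $q$ corresponds, after re-indexing the configuration, to $\cL(\e\om_{k+q})$ acting on the same cell — is small whenever the scalars $\e\xi_k$ and $\e\om_{k+q}$ are close. This is exactly the content of the decomposition \eqref{2.6}: $\cL(t)=t\cL_1+t^2\cL_2+t^3\cL_3(t)$ with $\cL_1,\cL_2$ bounded and $\cL_3$ bounded uniformly and continuous in $t$, so $t\mapsto\cL(t)$ is Lipschitz (indeed continuous) as a map into the bounded operators $\Ho^{2m}(\square,\g_\Pi)\to L_2(\square)$ on the compact interval $[-\e t_0,\e t_0]$. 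Hence there is a modulus of continuity $\varrho$ with $\|\cL(\e\xi_k)-\cL(\e\om_{k+q})\|\leqslant\varrho(\e|\xi_k-\om_{k+q}|)$, so choosing $\d=\d(N,p)>0$ small enough that $(2N+1)^{d_1}\varrho(\e\d)<p^{-2}$, the event $\{|\xi_k-\om_{k+q}|<\d\ \text{for all }k\in\G_N\}$ is contained in $\{D_N(\xi,\om,q)<\tfrac1p\}$.

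Then I would invoke the structure $\om=(\om_k)_{k\in\G}$ as an i.i.d.\ field with marginal $\mu$ and the fact that $\xi_k\in\supp\mu$ for all $k$: for a single translate $q$, the probability that $|\xi_k-\om_{k+q}|<\d$ simultaneously for all $k\in\G_N$ equals $\prod_{k\in\G_N}\mu\big((\xi_k-\d,\xi_k+\d)\big)>0$ because each factor is positive ($\xi_k$ lies in the support of $\mu$), and equals some fixed $c=c(N,p,\xi)>0$. Choosing an infinite sequence of translates $q_1,q_2,\ldots\in\G$ with the boxes $q_j+\G_N$ pairwise disjoint makes the events $E_j:=\{|\xi_k-\om_{k+q_j}|<\d\ \forall k\in\G_N\}$ independent with $\PP(E_j)=c$ for all $j$, so $\sum_j\PP(E_j)=\infty$ and the second Borel–Cantelli lemma gives $\PP(\limsup_j E_j)=1$; on this event some $q_j$ works, so $\PP(B_{\xi,N,p})=1$. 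Taking the countable intersection over $N,p$ completes the proof. The main obstacle is the bookkeeping in the first step — carefully matching the shift $\cS(q)\cL(\e\om_k)\cS(-q)$ acting on cell $\square_k$ with the re-indexed scalar $\om_{k+q}$, and controlling the operator norm uniformly over the finitely many cells in $\G_N$ via the continuity hypothesis on $\cL_3$; everything after that is standard Borel–Cantelli.
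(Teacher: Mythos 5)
Your proof is correct, and it differs from the paper's in one interesting step. Both arguments reduce, by monotone continuity, to showing $\PP(B_{\xi,N,p})=1$ for each fixed $N,p$, and both use the same continuity estimate coming from the decomposition $\cL(t)=t\cL_1+t^2\cL_2+t^3\cL_3(t)$ to conclude that the single-shift event (``$\om$ is within $\d$ of $\xi$ on a translate of $\G_N$'') has a fixed positive probability $c=\prod_{k\in\G_N}\mu\big((\xi_k-\d,\xi_k+\d)\big)$, which is positive because each $\xi_k\in\supp\mu$. The difference is how one upgrades this positive probability to full probability. The paper observes that $B_{\xi,N,p}$ is invariant under the shifts $T_i$, and then invokes the already-established ergodicity of $\PP$ under $\{T_i\}$ to conclude that a shift-invariant set of positive measure has measure one. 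You instead pick infinitely many translates $q_j$ with pairwise-disjoint windows $q_j+\G_N$, note that the resulting events $E_j$ are independent under the product measure with $\PP(E_j)=c>0$, and apply the second Borel--Cantelli lemma to get $\PP(\limsup_j E_j)=1\subseteq B_{\xi,N,p}$. Your route is slightly more elementary and self-contained: it uses only the product structure of $\PP$ (independence along disjoint blocks), not ergodicity, whereas the paper's route is shorter given that ergodicity has already been checked for the proof of (\ref{2.9}) and exploits the easy observation that $B_{\xi,N,p}$ is shift-invariant. Both are standard instances of the ``recurrence of finite patterns'' argument going back to \cite{KirschM-82b}, and either works here.
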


\begin{proof}
We follow the proof of Lemma~2 in \cite{KirschM-82b}. We let
\begin{equation} \label{eq:BxiNP}
B_{\xi,N,p}:=\left\{\om\in\Omega:\,
\exists\; q(\om)\in\G:\, D_N(\xi,\om,q)<\frac{1}{p}\right\}.
\end{equation}
The definition of $D_N$ implies $D_{N+1}(\xi,\om,p)\geqslant D_N(\xi,\om,p)$ and hence $B_{\xi,N+1,p}\subseteq B_{\xi,N,p}$.
Furthermore $B_{\xi,p+1}\subseteq B_{\xi,p}$.
Set $B_{\xi,p}:=\bigcap\limits_{N=1}^{\infty} B_{\xi,N,p}$
and $B_\xi:=\bigcap\limits_{p=1}^{\infty} B_{\xi,p}$.
Monotone continuity of probability measures implies then
\begin{align*}
& \PP(B_{\xi,p})=\lim\limits_{N\to\infty} \PP(B_{\xi,N,p}),
\quad \PP(B_\xi)=
\lim\limits_{p\to\infty} \PP(B_{\xi,p}).
\end{align*}
Thus, it is sufficient to prove that $\PP(B_{\xi,N,p})=1$ for each $N,p\in \NN$.
Since the set $B_{\xi,N,p}$ is invariant under the shifts $\{T_i\}_{i\in\G}$,
it is by ergodicity sufficient to prove that $\PP(B_{\xi,N,p})>0$.
Obviously
\begin{equation*}
B_{\xi,N,p}^*:=\left\{\om\in\Om:\, D_N(\xi,\om,0)<\frac{1}{p}\right\}
\subseteq B_{\xi,N,p}
\end{equation*}
(just chose $q(\omega)=0$). 
So, it suffices to show $\PP(B_{\xi,N,p}^*)>0$ to complete the proof.

The definition of the operator $\cL(t)$ yields
\begin{equation*}
\cL(t_1)-\cL(t_2)=(t_1-t_2) \cL_1 + (t_1^2-t_2^2) \cL_2 + (t_1^3-t_2^3) \cL_3(t_1) + t_2^3 \big(\cL_3(t_1)-\cL_3(t_2)\big).
\end{equation*}
This identity, definition (\ref{3.0}) of $D_N$ and the continuity of operator $\cL_3(t)$  in $t$ imply immediately
that $D_N(\xi,\om,0)$ tends to zero as $\xi_i-\omega_i\to0$, $i\in\G_N$.
Hence, there exists $\d=\d(\xi,p,N)$ such that $D_N(\xi,\om,0)<\frac{1}{p}$ once $|\xi_i-\omega_i|<\d$, $i\in\G_N$.
In other words,
\begin{equation*}
\left\{\om\in\Om:\, |\xi_i-\omega_i|<\d,\, i\in\G_N\right\}
\subseteq B_{\xi,N,p}^*
\end{equation*}
Since  $\G_N$ is finite, we see that the set on the right
has a positive measure, whenever all $\xi_i, i \in \G_N$ are elements of $\supp \mu$.
It implies that $B_{\xi,N,p}^*$ is of positive $\PP$-measure.
\end{proof}

\begin{lemma}\label{lm3.2}
The identity
\begin{equation*}
\Sigma_\e=\bigcup\limits_{\xi\in\Om} \spec(\Op_\e(\xi))
=\overline{\bigcup\limits_{\xi\in\Om} \spec(\Op_\e(\xi))}
\end{equation*}
holds true.
\end{lemma}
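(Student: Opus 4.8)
The plan is to prove the chain of inclusions
\[
\Sigma_\e\ \subseteq\ \bigcup_{\xi\in\Om}\spec(\Op_\e(\xi))\ \subseteq\ \overline{\bigcup_{\xi\in\Om}\spec(\Op_\e(\xi))}\ \subseteq\ \Sigma_\e ,
\]
which forces all three sets to coincide and, in particular, shows the union to be closed without any extra work. The first inclusion is immediate from Theorem~\ref{th2.1}: by~\eqref{2.9} there is at least one configuration $\om\in\Om$ --- indeed $\PP$-almost every one --- with $\spec(\Op_\e(\om))=\Sigma_\e$. The middle inclusion is trivial, and since $\Sigma_\e$ is closed the last inclusion follows once we establish the purely deterministic statement $\spec(\Op_\e(\xi))\subseteq\Sigma_\e$ for \emph{every} $\xi\in\Om$. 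This is the substantive part, and the place where Lemma~\ref{lm3.1} enters.

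To prove $\spec(\Op_\e(\xi))\subseteq\Sigma_\e$ I would first reduce it to a statement about a single, well-chosen configuration. Fix $\xi\in\Om$ and $\l\in\spec(\Op_\e(\xi))$. Since $\PP(B_\xi)=1$ by Lemma~\ref{lm3.1} while $\{\om:\spec(\Op_\e(\om))=\Sigma_\e\}$ also has full measure, these two sets intersect; choose $\om_0$ in the intersection, so that it is enough to show $\l\in\spec(\Op_\e(\om_0))=\Sigma_\e$. I would do this via Weyl's criterion. Given $\d>0$, pick a normalized $v\in\Dom(\Op_0)$ with $\|(\Op_\e(\xi)-\l)v\|_{L_2(\Pi)}<\d$. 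Multiplying $v$ by a scaled cut-off $\chi(\cdot/R)$, with $\chi$ smooth, equal to $1$ near the origin and compactly supported --- which preserves the boundary conditions~\eqref{2.3} and satisfies $\chi(\cdot/R)v\to v$ in $W_2^{2m}(\Pi)$ --- and renormalizing, I may in addition assume $\supp v\subset\overline{\Pi_N}$ for some $N\in\NN$; here one uses that $\Op_\e(\xi)$ maps $\Dom(\Op_0)$ boundedly, in the $W_2^{2m}(\Pi)$-norm, into $L_2(\Pi)$. The elliptic a priori estimate for $\Op_0$ moreover furnishes a bound $\|v\|_{W_2^{2m}(\Pi)}\le M_\l$ depending only on $\l$ (and on $\d\le1$).

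Next I would transplant $v$ into a trial function for $\Op_\e(\om_0)$. Because $\om_0\in B_\xi$, for the $N$ fixed above and any $p\in\NN$ there is $q=q(N,p,\xi)\in\G$ with $D_N(\xi,\om_0,q)<\frac1p$. Set $w:=\cS(q)v$; this is again a normalized element of $\Dom(\Op_0)=\Dom(\Op_\e(\om_0))$, now supported in $q+\overline{\Pi_N}$. Since the coefficients $A_{\a\b}$ are $\G$-periodic, $\Op_0$ commutes with $\cS(q)$, and a short computation gives
\[
\|(\Op_\e(\om_0)-\l)w\|_{L_2(\Pi)}=\big\|(\Op_\e(\xi)-\l)v+\big(\cS(-q)\cL_\e(\om_0)\cS(q)-\cL_\e(\xi)\big)v\big\|_{L_2(\Pi)} .
\]
On functions supported in $\overline{\Pi_N}$ the operator $\cS(-q)\cL_\e(\om_0)\cS(q)-\cL_\e(\xi)$ is a finite sum over $k\in\G_N$ of single-cell differences; using the orthogonality of those cell pieces, the definition~\eqref{3.0} of $D_N$, and the bound $\|v\|_{W_2^{2m}(\Pi)}\le M_\l$, its contribution is at most $D_N(\xi,\om_0,q)^2\,M_\l\le M_\l/p^2$. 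Taking $p$ large enough yields $\|(\Op_\e(\om_0)-\l)w\|_{L_2(\Pi)}<2\d$ with $\|w\|_{L_2(\Pi)}=1$; as $\d>0$ was arbitrary, Weyl's criterion gives $\l\in\spec(\Op_\e(\om_0))=\Sigma_\e$, which completes the argument.

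I expect the main obstacle to be precisely this transplantation step, with its two delicate ingredients: (i) the reduction to compactly supported trial functions with an \emph{explicit} control of their $W_2^{2m}(\Pi)$-norm, which rests on the elliptic a priori estimate for $\Op_0$; and (ii) the bookkeeping of how the cell-wise structure of $\cL_\e$ transforms under the shift $\cS(q)$, arranged so that the mismatch between $\xi$ restricted to $\G_N$ and $\om_0$ restricted to $q+\G_N$ is exactly the quantity $D_N$ that Lemma~\ref{lm3.1} makes arbitrarily small. The remaining pieces --- Weyl's criterion, $\G$-periodicity of $\Op_0$, and the fact that two full-measure events intersect --- are routine.
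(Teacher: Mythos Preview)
Your proposal is correct and follows essentially the same route as the paper's proof: both use Weyl's criterion for $\Op_\e(\xi)$, pick $\om_0\in B_\xi\cap\{\spec(\Op_\e(\om))=\Sigma_\e\}$ via Lemma~\ref{lm3.1}, and transplant trial functions by a lattice shift $\cS(\pm q)$ so that the mismatch $\cS(-q)\cL_\e(\om_0)\cS(q)-\cL_\e(\xi)$ is controlled by $D_N$. The only cosmetic difference is that you first cut off to get compactly supported trial vectors, whereas the paper keeps the Weyl sequence $\vp_p$ intact and handles the tail by choosing $N_p$ with $\|\vp_p\|_{W_2^{2m}(\Pi\setminus\Pi_{N_p})}\le 1/p$; both localization devices rely on the same elliptic a~priori bound for $\Op_0$.
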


\begin{proof}
We adapt the proof of Theorem~3 in \cite{KirschM-82b}. First of all, by (\ref{2.9}) we have
\begin{equation*}
\Sigma_\e
\subseteq \bigcup\limits_{\xi\in\Om} \spec(\Op_\e(\xi))
\end{equation*}
Let us prove the opposite inclusion.

We fix $\xi\in\Om$ and take $\l\in\spec(\Op_\e(\xi))$. By the Weyl criterion there exists a characteristic sequence $\{\vp_p\}_{p\in\NN}\subset \Dom(\Op_0)$ such that
\begin{equation}\label{3.1}
\|\vp_p\|_{L_2(\Pi)}=1,\quad \|(\Op_\e(\xi)-\l)\vp_p\|_{L_2(\Pi)} \leqslant \frac 1 p, \quad p\in\NN.
\end{equation}
By \cite[Ch. III, Sect. 6, Lm. 6.3]{Ber} we have the estimate
\begin{equation*}
\|\vp_p\|_{W_2^{2m}(\Pi)}
\leqslant C\big (\|(\Op_0+\iu)\vp_p\|_{L_2(\Pi)}
+|\l|+1\big  ),
\end{equation*}
where $C$ is a constant independent of $p$ and $\e$. Hence, the norm $\|\vp_p\|_{W_2^{2m}(\Pi)}$ is bounded uniformly in $p$, and the suprema
\begin{equation*}
\vp_p^*:=\sup\limits_{k\in\G} \|\vp_p\|_{W_2^{2m}(\square_k)},
\quad
\vp^*:=\sup\limits_{p\in\NN} \vp_p^*
\end{equation*}
{are} well-defined  and finite.
Pick $\om$ from the full measure set
$$
\Om^*:=B_\xi\cap \{\om\in\Om:\, \spec(\Op_\e(\om))=\Sigma_\e\}
\subseteq B_{\xi,N_p,p}
$$
and
let us show that $\psi_{p}:=\cS(-q(N_p,p,\om))\vp_p$ is a characteristic sequence for $\Op_\e(\om)$ at $\l$.
Here $N_p$ is a number fixed by the inequality
\begin{equation}\label{3.2}
\|\varphi_p\|_{W_2^{2m}(\Pi\setminus\Pi_{N_p})}\leqslant \frac{1}{p},
\end{equation}
while  $q$ comes from definition (\ref{eq:BxiNP}) of the set $B_{\xi,N_p,p}$.
Employing definition (\ref{3.4}) of the set $B_p$, (\ref{3.1}), (\ref{3.2}), by straightforward calculation we obtain
\begin{align*}
\|&(\Op_\e(\om)-\l)\psi_{p}\|_{L_2(\Pi)} = \|(\Op_0 + \cS(-q)\cL_\e(\om)\cS(q)-\l)\vp_p\|_{L_2(\Pi)}
\\
&\leqslant \|(\Op_\e(\xi)-\l)\vp_p\|_{L_2(\Pi)}  + \|(\cS(-q)\cL_\e(\om)\cS(q)-\cL_\e(\xi))\vp_p\|_{L_2(\Pi)}
\\
&\leqslant  \frac{1}{p} + \|(\cS(-q)\cL_\e(\om)\cS(q)-\cL_\e(\xi))\vp_p\|_{L_2(\Pi_{N_p})}
\\
&\hphantom{\leqslant}+ \|(\cS(-q)\cL_\e(\om)\cS(q)-\cL_\e(\xi))\vp_p\|_{L_2(\Pi\setminus\Pi_{N_p})}
\\
&\leqslant \frac{1}{p} + D_{N_p} (\xi,\om,q) \vp^* + C\|\vp_p\|_{W_2^{2m}(\Pi\setminus\Pi_{N_p})}
\end{align*}
where the constant $C$ is independent of $p$.
Since the chosen $\omega$ is in particular in $B_{\xi,N_p,p}$ and by the choice (\ref{3.2})
\begin{align*}
\frac{1}{p} + D_{N_p} (\xi,\om,q) \vp^* + C\|\vp_p\|_{W_2^{2m}(\Pi\setminus\Pi_{N_p})}
\leqslant \frac{1}{p} + \frac{\vp^* }{p} + \frac{C}{p}.
\end{align*}
Hence, $\psi_{p}$ is indeed a characteristic sequence for $\Op_\e(\omega)-\l$, i.e.~$\l \in\spec(\Op_\e(\omega))$ 
for any $\omega \in \Omega^*$.
Therefore, $\spec(\Op_\e(\xi))\subseteq\Sigma_\e$ for each $\xi\in\Om$. This completes the proof of the lemma.
\end{proof}

In view of the above lemma, we can obtain $\Sigma_\e$ as the union of the spectra $\Op_\e(\xi)$ over all $\xi\in\Om$.
We follow the proof of Theorem~4 in \cite{KirschM-82b} to prove that this union can be taken over $2^k\G$-periodic configurations $\xi$ only.

Given $N\in\NN$, $\xi\in\Om$, we introduce the $2^N\G$-periodic sequence $\xi^{(N)}\in\Om$ as follows: $\xi_k^{(N)}=\xi_k$, $k\in\G_{2^N}$, and for $k\in\G\setminus\G_{2^N}$ we define other terms of $\xi^{(N)}$ by the periodic continuation. If we prove that $\Op_\e(\xi^{(N)})$ converges to $\Op_\e(\xi)$ as $N\to\infty$ in the strong resolvent sense, it will imply $\spec(\Op_\e(\xi))\subseteq\overline{\bigcup\limits_{N=1}^{\infty} \spec(\Op_\e(\xi^{(N)}))}$, and hence,
\begin{equation*}
\overline{\bigcup\limits_{\xi\in\Om} \spec(\Op_\e(\xi))}=\overline{\bigcup\limits_{N\in \NN}\bigcup\limits_{\xi \ \text{is $2^N \G$-periodic}} \spec\big(\Op_\e(\xi)\big)},
\end{equation*}
which will prove (\ref{2.10}).

The desired strong resolvent convergence means that for each $f\in L_2(\Pi)$ the solution to the equation $\big(\Op_\e(\xi^{(N)})-\iu\big)u^{(N)}=f$ converges in $L_2(\Pi)$ to the solution of the equation $\big(\Op_\e(\xi)-\iu\big)u=f$. These equations yield
\begin{equation*}
\big(\Op_\e(\xi^{(N)})-\iu\big)(u-u^{(N)})=\big(\cL_\e(\xi^{(N)})-\cL_\e(\xi)\big)u.
\end{equation*}
Hence, by the definition of $\xi^{(N)}$ and the estimate for the resolvent of a self-adjoint operator
\begin{equation*}
\|u-u^{(N)}\|_{L_2(\Pi)} \leqslant \big\|\big(\cL_\e(\xi^{(N)})-\cL_\e(\xi)\big)u\big\|_{L_2(\Pi)} \leqslant C\|u\|_{W_2^{2m}(\Pi\setminus\Pi_{2^N})},
\end{equation*}
where $C$ is a constant independent of $u$ and $N$. Since $u\in W_2^{2m}(\Pi)$, we get $\|u\|_{W_2^{2m}(\Pi\setminus\Pi_{2^N})}\to0$ as $N\to\infty$
and this completes the proof of Theorem~\ref{th2.1}.


\section{Upper bound on $\inf\Sigma_\e$: Proof of Theorem~\ref{th2.2}.}
In order to prove (\ref{2.19}), first  we estimate the resolvent of the unperturbed operator $\Op_0$.
Since this operator is self-adjoint, we have for any $\l$ in the resolvent set
\begin{equation}\label{4.1}
\|(\Op_0-\l)^{-1}\|_{L_2(\Pi)\to L_2(\Pi)}=\frac{1}{\dist(\l,\spec(\Op_0))}.
\end{equation}
We rewrite the resolvent equation $(\Op_0-\l)u=f$ as
\begin{equation}\label{4.2}
(\Op_0-\iu)u=(\l-\iu)u+f.
\end{equation}
Since the operator $(\Op_0-\iu)$ is invertible, by \cite[Ch. I\!I\!I, Sect. 6, Lm. 6.3]{Ber} and (\ref{4.1}), the solution of equation (\ref{4.2}) satisfies the estimate
\begin{equation}\label{4.3}
\|u\|_{W_2^{2m}(\Pi)} \leqslant C\|(\l-\iu)u+f\|_{L_2(\Pi)} \leqslant \frac{C(|\l|+1)}{\dist(\l,\spec(H_0))}\|f\|_{L_2(\Pi)},
\end{equation}
where $C$ is a positive constant independent of $\l$ and $f$.

Consider the resolvent equation for $\Op_\e(\xi)$, $\xi\in\Om$:
\begin{equation*}
(\Op_0+\cL_\e(\xi)-\l)u=f.
\end{equation*}
We can rewrite it as
\begin{equation*}
\big(I+\cL_\e(\xi)(\Op_0-\l)^{-1}\big)(\Op_0-\l)u=f,
\end{equation*}
where $I$ is the identity mapping. Hence, once
\begin{equation}\label{4.4}
\|\cL_\e(\xi)(\Op_0-\l)^{-1}\|<1,
\end{equation}
the resolvent of $\Op_\e$ is well-defined and  is given by the formula
\begin{equation*}
(\Op_\e(\xi)-\l)^{-1}=(\Op_0-\l)^{-1} \big(I+\cL_\e(\xi)(\Op_0\l)^{-1}\big)^{-1}.
\end{equation*}
In view of the definition of operators $\cL_i$ and estimate (\ref{4.3}), we see that inequality (\ref{4.4}) is satisfied provided

\begin{equation}\label{4.5} \frac{C\e(|\l|+1)}{\dist(\l,\spec(\Op_0))}\sup_{|t|\leqslant t_0} \|\cL(t)\|_{W_2^{2m}(\square) \to L_2(\square)}<1,
\end{equation}
where constant $C$ is the same as in (\ref{4.3}). Hence, such $\l$ are in the resolvent set of $\Op_\e(\om)$.
A contraposition of this statement yields (\ref{2.19}).

Given $s\in\supp \mu$, we denote by $\xi^s$ the constant sequence $\{s\}_{k\in\G}$.
It follows from (\ref{2.10}) that $\Sigma_\e\supseteq\spec(\Op_\e(\xi^s))$, and hence, by the minimax principle
\begin{equation}\label{4.6}
\inf\Sigma_\e\leqslant \inf\spec(\Op_\e(\xi^s))=\inf\limits_{\genfrac{}{}{0 pt}{}{u\in\Dom(\Op_\e(\xi^s))}{u\not=0}} \frac{\big(\Op_\e(\xi^s)u,u\big)_{L_2(\Pi)}}{\|u\|_{L_2(\Pi)}^2}.
\end{equation}
Let $\phi^\e(x,s):=\psi_0(x)+\e s\psi_1(x)$. Since both functions $\psi_0$ and $\psi_1$ satisfy periodic boundary conditions on $\g_l$, we extend $\phi_s^\e$ periodically to $\Pi$ keeping the same notation for the extension.
By $\chi_p\colon \Pi \to [0,1]$ we denote an infinitely differentiable function being one in $\Pi_p$, vanishing in $\Pi\setminus \Pi_{p+2}$ and satisfying the estimates
\begin{equation}\label{4.7}
\left|\frac{\p^\a \chi_p}{\p x^\a}\right| \leqslant C_\a 
\quad \text{in}\quad \overline{\Pi_{p+2}\setminus\Pi_p},
\end{equation}
where $C_\a$ is a positive constant independent of $p$ and $x$.
We also suppose that $\chi_p$ in fact depends only on the $d_1$ coordinates in $S$.
More precisely, for each pair $x,\, \widetilde{x}\in\Pi$ such that $x-\widetilde{x}$ is orthogonal to $S$, we have $\chi_p(x)=\chi_p(\widetilde{x})$.

In view of the above described properties of $\phi^\e$ and $\chi_p$, the function
\begin{equation*}
u_p^\e(x):=e^{\iu\theta_0x}\phi^\e(x,s)\chi_p(x)
\end{equation*}
belongs to the domain of $\Op_0$ and therefore, $u_p^\e\in \Dom(\Op_\e(\xi^s))$. We choose $u_p^\e$ as the test function in (\ref{4.6}) to obtain
\begin{equation}\label{4.8}
\inf\Sigma_\e\leqslant \frac{\big(\Op_\e(\xi^s)u_p^\e,u_p^\e\big)_{L_2(\Pi)}}{\|u_p^\e\|_{L_2(\Pi)}^2}.
\end{equation}

Let us calculate the right hand side of this inequality. It is clear that
\begin{equation}\label{4.9}
\frac{\big(\Op_\e(\xi^s)u_p^\e,u_p^\e\big)_{L_2(\Pi)}}{\|u_p^\e\|_{L_2(\Pi)}^2}=
\frac{\big((\Op_0(\theta_0)+e^{-\iu \theta_0 x} \cL_\e(\xi^s) e^{\iu \theta_0 x})\phi^\e \chi_p,\phi^\e \chi_p\big)_{L_2(\Pi)}}{\|\phi^\e\chi_p\|_{L_2(\Pi)}^2}.
\end{equation}

It follows from the definition of $\phi^\e$ that for each $k\in\G$ the identities
\begin{equation}\label{4.10}
\begin{aligned}
&\|\phi^\e\|_{L_2(\square_k)}^2=1+\e^2s^2\|\psi_1\|_{L_2(\square)}^2,
\\
&\|\phi^\e\|_{W_2^{2m}(\square_k)}^2=\|\psi_0\|_{W_2^{2m}(\square)}^2
+2\e s\RE(\psi_0,\psi_1)_{W_2^{2m}(\square)} + \e^2 s^2 \|\psi_1\|_{W_2^{2m}(\square)}^2
\end{aligned}
\end{equation}
{hold true.}
These identities and the above described properties of $\chi_p$ imply
\begin{align}
&\|\phi^\e\chi_p\|_{L_2(\Pi)}^2=\|\phi^\e\|_{L_2(\Pi_p)}^2 + \|\phi^\e\chi_p\|_{L_2(\Pi_{p+2}\setminus\Pi_p)}^2,\nonumber
\\
&\|\phi^\e\|_{L_2(\Pi_p)}^2\geqslant C p^{d_1},
\quad
\|\phi^\e\|_{L_2(\Pi_{p+2}\setminus\Pi_p)}^2\leqslant C p^{d_1-1}, \label{4.11}
\end{align}
where symbol $C$ stands for inessential constants independent of $p$, $\e$ and $s$.

The boundedness of $\cL_1, \cL_1, \cL_2, \cL_3(t) : W_2^{2m}(\square)\to L_2(\square)$ and (\ref{4.7}) yield
\begin{equation}\label{4.12}
\big\|\big(\Op_0(\theta_0) + e^{-\iu \theta_0 x} \cL_\e(\xi^s) e^{\iu \theta_0 x}\big)\phi^\e \chi_p \big\|_{L_2(\Pi_{p+2}\setminus\Pi_p)} \leqslant C \|\phi^\e\|_{W_2^{2m}(\Pi_{p+2}\setminus\Pi_p)}\leqslant C p^{d_1-1},
\end{equation}
where $C$ is a constant independent of $p$, $\e$ and $s$.

It is clear that
\begin{align}\label{4.13}
& \frac{\big((\Op_0(\theta_0)+e^{-\iu \theta_0 x} \cL_\e(\xi^s) e^{\iu \theta_0 x})\phi^\e \chi_p,\phi^\e \chi_p\big)_{L_2(\Pi)}}{\|\phi^\e\chi_p\|_{L_2(\Pi)}^2}= T_p^1(\e,s) + T_p^2(\e, s),
\\
&T_p^1(\e,s):= \frac{\big((\Op_0(\theta_0)+e^{-\iu \theta_0 x} \cL_\e(\xi^s) e^{\iu \theta_0 x})\phi^\e,\phi^\e \big)_{L_2(\Pi_p)}}{\|\phi^\e\chi_p\|_{L_2(\Pi_{p+2})}^2},\nonumber
\\
&T_p^2(\e,s):= \frac{\big((\Op_0(\theta_0)+e^{-\iu \theta_0 x} \cL_\e(\xi^s) e^{\iu \tau_0 x})\phi^\e\chi_p,\phi^\e \chi_p \big)_{L_2(\Pi_{p+2}\setminus\Pi_p)}}{\|\phi^\e\chi_p\|_{L_2(\Pi_{p+2})}^2}.
\end{align}
By (\ref{4.11}), (\ref{4.12}) we obtain
\begin{equation}\label{4.14}
|T_p^2(\e,s)|\leqslant C p^{-1},
\end{equation}
where the constant $C$ is independent of $p$, $\e$, and $s$, and
\begin{equation}\label{4.15}
\lim\limits_{p\to+\infty}  T_p^1(\e,s) = \frac{\big((\Op_0(\tau_0)+e^{-\iu \theta_0 x} \cL_\e(\om^s) e^{\iu \theta_0 x})\phi^\e,\phi^\e \big)_{L_2(\square)}}{\|\phi^\e\|_{L_2(\square)}^2}.
\end{equation}
Together with (\ref{4.8}), (\ref{4.9}) it yields
\begin{equation}\label{4.16}
\inf \Sigma_\e\leqslant \frac{\big((\Op_0(\theta_0)+e^{-\iu \theta_0 x} \cL_\e(\om^s) e^{\iu \theta_0 x})\phi^\e,\phi^\e \big)_{L_2(\square)}}{\|\phi^\e\|_{L_2(\square)}^2}.
\end{equation}
Equation (\ref{2.17}) and the eigenvalue equation for $\psi_0$ imply that
\begin{align*}
\big(\Op_0(\theta_0) + e^{-\iu \theta_0 x} \cL_\e(\om^s) e^{\iu \theta_0 x}\big)\phi^\e=&\L_0\phi^\e + \e s \L_1 \psi_0
\\
&+ \e^2 s^2 e^{-\iu \theta_0 x} (\cL_2 + \e s \cL_3(\e s) ) e^{\iu \theta_0 x} \phi_\e
\\
 &+ \e^2 s^2 e^{-\iu \theta_0 x} \cL_1 e^{\iu \theta_0 x} \psi_1.
\end{align*}
Substituting this identity into (\ref{4.16}), we get
\begin{equation}\label{4.17}
\begin{aligned}
\inf\Sigma_\e\leqslant &\L_0+ \frac{1}{\|\phi^\e\|_{L_2(\square)}^2} \Big(
\e s\L_1(\psi_0,\phi^\e)_{L_2(\square)} + \e^2 s^2 (\cL_2 e^{\iu\tht_0 x}\phi^\e, e^{\iu\tht_0 x}\phi^\e)_{L_2(\square)}
\\
&+\e^2 s^2 (\cL_1 e^{\iu\tht_0 x}\psi_1,e^{\iu\tht_0 x}\phi^\e)_{L_2(\square)}
+\e^3 s^3 (\cL_3(\e s) e^{\iu\tht_0 x}\phi^\e,e^{\iu\tht_0 x}\phi^\e)_{L_2(\square)}.
\Big)
\end{aligned}
\end{equation}
We employ the identities
\begin{equation*}
(\psi_0,\psi_1)_{L_2(\square)}=0,\quad \|\phi^\e\|_{L_2(\square)}^2=1+\e^2 s^2 \|\psi_1\|_{L_2(\square)}^2
\end{equation*}
and definitions (\ref{2.16}), (\ref{2.18}) of $\L_1$, $\L_2$
to check that
\begin{equation*}
 \e s\L_1 (\psi_0,\phi^\e)_{L_2(\square)}=\e s \L_1\|\phi^\e\|_{L_2(\square)}^2 - \e^3 s^3 \L_1 \|\psi_1\|_{L_2(\square)}^2
\end{equation*}
and
\begin{align*}
(\cL_2 e^{\iu\tht_0 x}\phi^\e,&e^{\iu\tht_0 x}\phi^\e)_{L_2(\square)} + (\cL_1 e^{\iu\tht_0 x}\psi_1,e^{\iu\tht_0 x}\phi^\e)_{L_2(\square)}
\\
=& \L_2\big(\|\phi^\e\|_{L_2(\square)}^2 - \e^2 s^2 \|\psi_1\|_{L_2}^2\big)
+ 2\e s \RE (\cL_2 e^{\iu\tht_0 x}\psi_0,e^{\iu\tht_0 x}\psi_1)_{L_2(\square)}
\\
&  + \e^2 s^2 (\cL_2 e^{\iu\tht_0 x}\psi_1,e^{\iu\tht_0 x}\psi_1)_{L_2(\square)} + \e s (\cL_1 e^{\iu\tht_0 x}\psi_1,e^{\iu\tht_0 x}\psi_1)_{L_2(\square)}.
\end{align*}
Together with (\ref{4.17}) it yields
\begin{equation*}
\inf \Sigma_\e \leqslant \L_0 + \e s \L_1 + \e^2 s^2 \L_2 + \frac{\e^3 s^3 \L_3(\e s)}{1+\e^2 s^2 \|\psi_1\|_{L_2(\square)}^2}.
\end{equation*}
And thanks to (\ref{2.9}) it proves (\ref{2.20}). The proof of Theorem~\ref{th2.2} is complete.

\medskip

In this section we also prove Corollary~\ref{c:1}.

\begin{proof}[Proof of Corollary \ref{c:1}]
Since {the function $e^{\iu \theta_0 x}\psi_0(x)$}
is in the domain of $\cL_2$,
\begin{equation*}
(\cL_2e^{\iu \theta_0 x}\psi_0,e^{\iu \theta_0 x}\psi_0)_{L_2(\square)} \leqslant 0.
\end{equation*}
For  $\L_1=0$ {we have}
\begin{align*}
(\cL_1e^{\iu \theta_0 x}\psi_1,e^{\iu \theta_0 x}\psi_0)_{L_2(\square)}
= &-(\Op_0(\theta_0) \psi_1,\psi_1)_{L_2(\square)}
= -(E_2-\L_0) \|\psi_1\|^2_{L_2(\square)}
\\
\leqslant &-c_1 \|\psi_1\|^2_{L_2(\square)},
\end{align*}
where $E_2=\spec(\Op_0(\theta))\setminus\{E_0(\theta)\}$ and $c_1$ the spectral gap.
Thus,
\[
\L_2
\leqslant - c_1 \|\psi_1\|^2 + (\cL_2e^{\iu \theta_0 x}\psi_0,e^{\iu \theta_0 x}\psi_0)_{L_2(\square)}
\leqslant - c_1 \|\psi_1\|^2.
\]
\end{proof}

\section{Lower bound on $\inf\Sigma_\e$: Proof of Theorem~\ref{th2.3}}
Let $\g_l^\pm$ be a pair of opposite faces in $\g_l$, namely,
\begin{equation*}
\g_l^-:=\p\square\cap\p\square_{-e_i},\quad \g_l^+:=\p\square\cap\p\square_{e_i}
\end{equation*}
for some $1\leqslant i\leqslant d_1$.
We recall that $e_1,\ldots,e_{d_1}$ is the basis of lattice $\G$, and $\square_{\pm e_i}$ is just $\square_k$ with $k=\pm e_i$. Let us show that
\begin{equation}\label{4.18}
b_j\big|_{\g_l^-}=-b_j\big|_{\g_l^+}
\end{equation}
for any choice of $1\leqslant i\leqslant d_1$.
We first observe that by the periodicity of the functions $A_{\a\b}$ and $\psi_0$ we have
\begin{equation}\label{4.18a}
\Big|\cB_j\psi_0\big|_{\g_l^-}\Big|=\Big|\cB_j\psi_0\big|_{\g_l^+}\Big|.
\end{equation}
For each $u\in \Dom(\Op_0)$
with compact support we have
\begin{align*}
 \sum\limits_{
\genfrac{}{}{0 pt}{}{\a,\b\in \mathds{Z}_+^d}{|\a|,|\b|\leqslant m}
} (-1)^{|\a|} & \int\limits_{\Pi} \overline{\psi_0}   (\p + i \theta)^\a A_{\a\b} (\p + i \theta)^\b u \di x
\\
&= \sum\limits_{
\genfrac{}{}{0 pt}{}{\a,\b\in \mathds{Z}_+^d}{|\a|,|\b|\leqslant m}
}   \int\limits_{\Pi} A_{\a\b} (\p + i \theta)^\b u\, \overline{(\p + i \theta)^\a\psi_0}\di x
\end{align*}
and by (\ref{2.23}) we also get
\begin{align*}
\sum\limits_{
\genfrac{}{}{0 pt}{}{\a,\b\in \mathds{Z}_+^d}{|\a|,|\b|\leqslant m}
} &(-1)^{|\a|} \int\limits_{\Pi} \overline{\psi_0}   (\p + \iu \theta_0)^\a A_{\a\b} (\p + \iu \theta_0)^\b u \di x
\\
=& \sum\limits_{k\in\G} \sum\limits_{j=1}^{m} (\cB_{2m-j}\cS(-k)u,\cB_j \psi_0)_{L_2(\g_l)}
 \\
 &+ \sum\limits_{k\in\G}  \sum\limits_{
\genfrac{}{}{0 pt}{}{\a,\b\in \mathds{Z}_+^d}{|\a|,|\b|\leqslant m}
}  \big( A_{\a\b} (\p + \iu \theta_0)^\b u, \overline{(\p + \iu \theta_0)^\a\psi_0}\big)_{L_2(\square_k)}
\\
=& \sum\limits_{k\in\G} \sum\limits_{j=1}^{m} (\cB_{2m-j}\cS(-k)u,\cB_j \psi_0)_{L_2(\g_l)}
 \\
 &+   \sum\limits_{
\genfrac{}{}{0 pt}{}{\a,\b\in \mathds{Z}_+^d}{|\a|,|\b|\leqslant m}
}  \int\limits_{\Pi} A_{\a\b} (\p + i \theta)^\b u\, \overline{(\p + i \theta)^\a\psi_0}\di x.
\end{align*}
Hence,
\begin{equation*}
\sum\limits_{k\in\G} \sum\limits_{j=1}^{m} (\cB_{2m-j}\cS(-k)u,\cB_j \psi_0)_{L_2(\g_l)}=0.
\end{equation*}
Since $u$ is arbitrary and $\psi_0$ is periodic, the above identity is possible only if for all $1\leqslant i\leqslant d_1$
\begin{align*}
\cB_{2m-j} \cS(-k)u\Big|_{\g_l^-} \overline{\cB_j\psi_0}\Big|_{\g_l^-}= & -\cB_{2m-j} \cS(-k-e_i)u\Big|_{\g_l^-} \overline{\cB_j\cS(-e_i)\psi_0}\Big|_{\g_l^-}
\\
=&-\cB_{2m-j} \cS(-k )u\Big|_{\g_l^+} \overline{\cB_j\psi_0}\Big|_{\g_l^+}
\end{align*}
for each $k\in\G$. Dividing this identity by  $\Big|\cB_j\psi_0\Big|_{\g_l^-}\Big|^2=\Big|\cB_j\psi_0\Big|_{\g_l^+}\Big|^2$,
 cf.~(\ref{4.18a}), and letting $u=\psi_0$ in the vicinity of $\square_k$, we arrive at (\ref{4.18}).

It follows from (\ref{4.18}) that for each
{$u\in\Dom(\Op_0)$}
\begin{equation*}
\sum\limits_{k\in\G} \sum\limits_{j=1}^{m} (\cB_{2m-j}\cS(-k)u,\cB_j \cS(-k) u)_{L_2(\g_l)} = 0.
\end{equation*}
Employing this identity and the minimax principle, we obtain:
\begin{equation}
\begin{aligned}
\inf\spec\big(\Op_\e(\xi)\big)& = \inf\limits_{\genfrac{}{}{0 pt}{}{
{u\in\Dom(\Op_0)}
}{u\not=0}} \frac{\fm_0(u,u) + (\cL_\e(\xi)u,u)_{L_2(\Pi)}}{\|u\|_{L_2(\Pi)}^2}
\\
 &=  \inf\limits_{\genfrac{}{}{0 pt}{}{
{u\in\Dom(\Op_0)}
}{u\not=0}} \frac{\fm_0(e^{\iu\theta_0x}u,e^{\iu\theta_0x}u) + (\cL_\e(\xi)e^{\iu\theta_0x}u,e^{\iu\theta_0x}u)_{L_2(\Pi)}}{\|u\|_{L_2(\Pi)}^2}
\\
 &=  \inf\limits_{\genfrac{}{}{0 pt}{}{
{u\in\Dom(\Op_0)}
}{u\not=0}}
\frac{1}{\|u\|_{L_2(\Pi)}^2} \Big(
 \fm_0(e^{\iu\theta_0x}u,e^{\iu\theta_0x}u) + ( \cL_\e(\xi)e^{\iu\theta_0x}u,e^{\iu\theta_0x}u)_{L_2(\Pi)}
\\
&\hphantom{= \inf\limits_{\genfrac{}{}{0 pt}{}{
{u\in\Dom(\Op_0)}
}{u\not=0}}\frac{1}{\|u\|_{L_2(\Pi)}^2} \Big(}+ \sum\limits_{k\in\G} \sum\limits_{j=1}^{m} (b_j \cB_{j-1}\cS(-k)u,\cB_{j-1}\cS(-k)u)_{L_2(\g_l)}  \Big)
\\
&= \inf\limits_{\genfrac{}{}{0 pt}{}{
{u\in\Dom(\Op_0)}
}{u\not=0}}
\frac{\sum\limits_{k\in\G} \Big(
\widehat{\fm}_0(u,u)  + \big( \cL(\e\xi_k)e^{\iu\theta_0x}\cS(-k)u,e^{\iu\theta_0x}\cS(-k)u\big)_{L_2(\square)}
\Big)}{\sum\limits_{k\in\G}\|\cS(-k)u\|_{L_2(\square)}^2}.
\end{aligned}\label{4.20}
\end{equation}
Since
\begin{equation*}
\Dom(\Op_0)
\subseteq \bigoplus\limits_{k\in\G} \Ho^{2m}(\square_k,\g_\Pi^k):=\left\{u\in L_2(\Pi):\, u\big|_{\square_k} \in \Ho^{2m}(\square_k,\g_\Pi^k),\,k\in\G
\right\},
\end{equation*}
by (\ref{4.20}) we get
\begin{equation}\label{4.21}
\begin{aligned}
\inf\spec\big(\Op_\e(\xi)\big)
&\geqslant  \inf\limits_{\bigoplus\limits_{k\in\G} \Ho^{2m}(\square_k,\g_\Pi^k)}
\frac{1}{\sum\limits_{k\in\G}\|\cS(-k)u\|_{L_2(\square)}^2}
 \sum\limits_{k\in\G}  \Big(
\widehat{\fm}_0(\cS(-k)u, \cS(-k)u)
\\
&\hphantom{\geqslant  \inf\limits_{\bigoplus\limits_{k\in\G}\Ho^{2m}(\square,\g_\Pi )} }+ \big( \cL(\e\xi_k)e^{\iu\theta_0x}\cS(-k)u,e^{\iu \theta_0 x}\cS(-k)u\big)_{L_2(\square)}
\Big)
\\
&\geqslant \inf\limits_{[s_-,s_+]} \l_\e(s),
\end{aligned}
\end{equation}
where
\begin{equation*}
\l_\e(s):=\inf\limits_{\Ho^{2m}(\square,\g_\Pi) }    \frac{\widehat{\fm}_0(u,u)  +
\big(\cL(\e s)e^{\iu\theta_0x} u, e^{\iu\theta_0x} u\big)_{L_2(\square)}
}{\|u\|_{L_2(\square)}^2}.
\end{equation*}
By the minimax principle, $\l_\e(s)$ is the bottom of the spectrum of the operator $$\widehat{\Op}_0 + e^{-\iu\theta_0 x}\cL(\e s)e^{\iu\theta_0 x}.$$ Assumption \ref{A2} and the $\widehat{\Op}_0$-boundedness of $\cL(\e s)$ yield that $\l_\e(s)$ is a discrete eigenvalue of $\widehat{\Op}_0 + e^{-\iu\theta_0 x}\cL(\e s)e^{\iu\theta_0 x}$ and $\l_\e(s)\to\L_0$ as $\e\to+0$ uniformly in $s\in[s_-,s_+]$. By regular perturbation theory one can easily construct the asymptotic expansion for $\l_\e(s)$:
\begin{equation}\label{4.22}
\l_\e(s)=\L_0+\e s\widehat{\L}_1 + \e^2 s^2 \widehat{\L}_2 + O(\e^3),
\end{equation}
where the estimate for the error term is uniform in $s\in[s_-,s_+]$, \begin{equation*}
\widehat{\L}_1=(e^{-\iu\theta_0 x}\cL_1 e^{\iu\theta_0 x}\psi_0,\psi_0)_{L_2(\square)}=\L_1,
\end{equation*}
and $\widehat{\L}_2$ is given by formula (\ref{2.29}).
The asymptotics (\ref{4.22}), estimate (\ref{4.21}) and the definition of $\l_\e(s)$ imply
(\ref{2.30}).
The proof is complete.

\section*{Acknowledgments}

This work was initiated while the authors were at the Chair of Stochastics, Faculty of Mathematics, of the Technische Universit\"at Chemnitz. It was partially financially supported by the DFG through the project grant \emph{Eindeutige-Fortsetzungsprinzipien und Gleichverteilungseigenschaften von Eigenfunktionen}. The research of D.B. was supported by the grant of Russian Science Foundation no. 14-11-00078.

\end{document}